\documentclass[12pt,reqno]{amsart}
\usepackage{amsmath,amssymb,amsthm, amsfonts}
\usepackage{mathpazo}          % Elegant Palatino font (optional)
\usepackage{hyperref}          % Clickable references
\usepackage{microtype}         % Improves text appearance
\usepackage{graphicx} % Required for inserting images
\usepackage{amsfonts}
\usepackage{amsmath}
\usepackage[utf8]{inputenc}
\usepackage[T1]{fontenc}
\usepackage{mathrsfs}
\usepackage{color}
\usepackage{hyperref}
\usepackage{comment}
\usepackage{enumerate}
\usepackage{enumitem}
\usepackage{pgfplots}
\usetikzlibrary{positioning}
\usetikzlibrary{calc}

\usepackage{mathrsfs}
\usepackage{latexsym}

\usepackage{subcaption}
\usepackage{tikz}
\usepackage[letterpaper,margin=1in]{geometry}
\newtheorem{defn}{Definition}[section]
\newtheorem{theorem}{Theorem}[section]

\newtheorem{lemma}{Lemma}[section]

\newcommand{\RR}{\mathbb{R}}
\newcommand{\NN}{\mathbb{N}}

\date{\today}
\title{On principal eigenpairs for the $(p,q)$-Laplacian in exterior domain}
\author[M.~Chhetri]{Maya Chhetri}
\address{M. Chhetri \newline
Department of Mathematics and Statistics, University of North Carolina Greensboro, Greensboro, NC, USA}
\email{m\_chhetr@uncg.edu}

\author[P.~Dr\'abek]{Pavel Dr\'abek}
\address{P.~Dr\'abek \newline
Departament of Mathematics and NTIS, Faculty of Applied Sciences, University of West Bohemia, Univerzitni 22 30614 Plzen, Czech Republic}
\email{pdrabek@kma.zcu.cz}

\author[R.~Shivaji]{Ratnasingham Shivaji}
\address{R.~Shivaji \newline
Department of Mathematics and Statistics, University of North Carolina Greensboro, Greensboro, NC, USA}
\email{shivaji@uncg.edu}

\subjclass[2020]{35J25, 35J60, 35J62}  
\keywords{Exterior domain, $(p,q)$ - Laplacian, principal eigenpair, fibering, conditional critical point, asymptotics}
\dedicatory{Dedicated to the memory of Professor Stanislav I. Pohozaev}

\begin{document}

\begin{abstract}
\noindent
We consider an eigenvalue problem of the form
\begin{equation*}
    \left\{\begin{array}{rclll}
    -\Delta_{p} u  -\Delta_{q} u&=& \lambda K(x)|u|^{p-2}u &\mbox{ in } \Omega^e \\ 
     u&=&0\qquad \quad &\mbox{ on } \partial \Omega\\
     u(x) &\to& 0 &\mbox{ as }  |x| \to \infty\,,
     \end{array}\right.
\end{equation*}
where $\Omega^e$ is the exterior of an open connected and bounded set $\Omega$ in $\RR^N$  $(N \geq 2)$, $p, q \in (1, N)$ with $p \neq q$,  $0 < K \in L^{\infty}(\Omega^e) \cap L^{\frac{N}{p}}(\Omega^e)$, and $\lambda \in \RR$. We establish the existence of an unbounded set of the principal eigenvalues and corresponding eigenfunctions. Moreover, we establish the regularity, positivity  and the asymptotic profiles of these eigenfunctions with respect to the eigenvalue parameter $\lambda$. We use the {\em fibering method} of S.~I. Pohozaev  to prove our results.
\end{abstract}
\maketitle
\section{Introduction}
Let $\Omega \subset \RR^N\, (N \geq 2)$ be an open, connected, bounded set     
and $\Omega^e:=\RR^N \setminus \overline{\Omega}$. Let $p, q \in (1, N)$ with $p \neq q$ and $K: \Omega^e \to (0, \infty)$ be such that $K \in L^{\infty}(\Omega^e) \cap L^{\frac{N}{p}}(\Omega^e)$. Throughout this paper, we focus on the following eigenvalue problem:
\begin{equation}
\label{pde}
    \left\{\begin{array}{rclll}
    -\Delta_{p} u  -\Delta_{q} u&=& \lambda K(x)|u|^{p-2}u &\mbox{ in } \Omega^e \\ 
     u&=&0\qquad \quad &\mbox{ on } \partial \Omega\\
     u(x) &\to& 0 &\mbox{ as }  |x| \to \infty\,,
     \end{array}\right.
\end{equation}
where $\lambda \in \RR$ is the eigenvalue parameter.
\par For $r \in (1, N)$, let $L^r(\Omega^e)$  denote the Lebesgue space with respect to the norm $\|u\|_{L^r}:= \left(\int_{\Omega^e}|u|^r\right)^{\frac{1}{r}}$,  and $W_0^{1,r}(\Omega^e):=\overline{C_0^{\infty}(\Omega^e)}^{\|u\|_r}$ denotes the Sobolev space, where the closure is taken with respect to $\|u\|_r:=\||\nabla u|\|_{L^r}$.

\par We will be utilizing an Orlicz space and the corresponding  Orlicz-Sobolev space to study the problem \eqref{pde}. To discuss these, for
$$
\psi_{p, q}(t):=|t|^{p-2}t + |t|^{q-2}t \mbox { for } p,q > 1 \text{ and } t \in \RR\,,
$$
we introduce the corresponding $N$-function
\begin{equation}
\label{def:N:fn}
\Psi_{p,q}(t):=\int_0^t\psi_{p,q}(s)\mathrm{d}s = \frac{t^p}{p} + \frac{t^q}{q} \mbox{ for } t \geq 0\,.
\end{equation}
We define the Orlicz space 
$$
Y:=L^{\Psi_{p,q}}(\Omega^e)=\left\{u: \Omega^e \to \RR \mbox{ measurable}:   \int_{\Omega^e} \Psi_{p,q}(|u|) < \infty\right\}\,.
$$
The space $Y$ is a Banach space with respect to the {\em Luxemburg norm} 
$$
\|u\|_Y:= 
\inf\left\{
\alpha >0: \int_{\Omega^e}\Psi_{p,q}\left(\frac{|u(x)|}{\alpha}\right) \leq 1
\right\}
$$
and the corresponding Orlicz-Sobolev space is given by
$X:=W_0^{1,\Psi_{p,q}}(\Omega^e):=\overline{C_0^{\infty}(\Omega^e)}^{\|u\|_X}
$, where $\|u\|_X:=\||\nabla u|\|_Y$. We remark that $Y$ is a uniformly convex Banach space, and hence so is $X$. See e.g.,  \cite[Cor.~6.11]{KZ2022} and \cite[Thm.~3]{Chen-Hu-Zhao-2003} for details. Further, 
 $$
X \hookrightarrow W_0^{1,p}(\Omega^e) \mbox{ and } 
X \hookrightarrow W_0^{1,q}(\Omega^e)\,. 
$$
In particular, 
\begin{equation}
\label{est:Sobolev:norm:bound}
\|u\|_p \leq p^{\frac{1}{p}}\|u\|_X \mbox{ and } \|u\|_q \leq q^{\frac{1}{q}}\|u\|_X\,,
\end{equation}
see \cite{Grecu2022} and \cite{Mihai-Stan-2016} for details. On the other hand, it follows from \eqref{def:N:fn} that 
\begin{equation}
    \label{est:X:norm:bound}
    \int_{\Omega^e}\Psi_{p,q}(|\nabla u|) = \frac{\|u\|_p^p}{p} + \frac{\|u\|_q^q}{q}\,.
\end{equation}
That is, if a function is bounded in $W^{1,p}_0(\Omega^e)$ and in $W^{1,q}_0(\Omega^e)$, then the function is bounded in $X$ as well since the Luxemburg norm is defined using the modular given by \eqref{def:N:fn}. As in \cite{Grecu2022, Mihai-Stan-2016}, the unbounded domain considered in \eqref{pde} gives rise to additional difficulties in the choice of appropriate function spaces for solutions, difficulties that do not arise in the bounded domain setting, where it is possible to work in the framework of Sobolev spaces. 
\begin{defn}
\label{def:soln:weak}
   A function $u \in X$ is a weak solution of \eqref{pde} if 
$$
\int_{\Omega^e}\left(|\nabla u|^{p-2} + |\nabla u|^{q-2}\right)\nabla u \nabla \varphi 
=\lambda \int_{\Omega^e} K(x) |u|^{p-2}u \varphi \quad \forall \varphi \in X\,.
$$
\end{defn}
\medskip
 Next, for $\lambda \in \RR$,   let  $J_{\lambda}: X \to \mathbb{R}$ be the energy functional defined by 
\begin{equation}
    \label{energy}
    J_{\lambda}(u):= \frac{1}{p}\int_{\Omega^e}|\nabla u|^p + 
     \frac{1}{q}\int_{\Omega^e}|\nabla u|^q -  \frac{\lambda}{p}\int_{\Omega^e}K(x)|u|^p\,.
\end{equation}
   Then $J_{\lambda}$ is of class $C^1$ with the derivative given by 
\begin{equation}
\label{deri:functional}
 \left<J_{\lambda}'(u), \phi\right> =\int_{\Omega^e}\left(|\nabla u|^{p-2} + |\nabla u|^{q-2}\right)\nabla u \nabla \phi 
-\lambda \int_{\Omega^e} K(x) |u|^{p-2}u \phi\,,  
\end{equation} 
for all $u,\, \phi \in X$, 
and critical points of $J_{\lambda}$ correspond to the weak solutions of \eqref{pde}. 

\medskip

Now, let 
\begin{equation}
\label{def:eigen}
    \lambda_1(p):=\inf\limits_{\substack{ u \in W_0^{1,p}(\Omega^e)\\
    u \neq 0}}\frac{\int_{\Omega^e}|\nabla u|^p}{\int_{\Omega^e}K(x)|u|^p}
\end{equation}
denote the principal eigenvalue and $\varphi_1$ the corresponding principal eigenfunction of $-\Delta_p$ in $\Omega^e$, see \cite{CD2014}. 
Then our main result reads as follows:
\begin{theorem}
\label{theo:main}
Problem \eqref{pde} admits a nontrivial weak solution 
 $u_{\lambda} \in X \cap L^{\infty}(\Omega^e)$ such that $u_{\lambda} >0$ in $\Omega^e$ and $\displaystyle \lim\limits_{|x| \to \infty}u_{\lambda}(x)=0$ uniformly if and only if 
 $\lambda \in (\lambda_1(p), \, \infty)$.  Further,
\begin{enumerate}[label=(\alph*)]
    \item if $p<q$ then 
    \begin{equation*}
        \lim\limits_{\lambda \to \lambda_1(p)^+}J_{\lambda}(u_{\lambda})= 0 \quad \text{ and }\quad \lim\limits_{\lambda \to \lambda_1(p)^+}\|u_{\lambda}\|_q = 0; \text{ and } 
        \end{equation*}        
        \begin{equation*}
        \lim\limits_{\lambda \to \infty}J_{\lambda}(u_{\lambda}) = -\infty \quad \text{ and }\quad \lim\limits_{\lambda \to \infty}\|u_{\lambda}\|_q = \infty\,,
        \end{equation*}       
     \item if $p>q$ then  \begin{equation*}
        \lim\limits_{\lambda \to \lambda_1(p)^+}J_{\lambda}(u_{\lambda})= \infty \quad \text{ and }\quad \lim\limits_{\lambda \to \lambda_1(p)^+}\|u_{\lambda}\|_q = \infty; \text{ and } 
        \end{equation*}        
        \begin{equation*}
        \lim\limits_{\lambda \to \infty}J_{\lambda}(u_{\lambda}) =0 \quad \text{ and }\quad  \lim\limits_{\lambda \to \infty}\|u_{\lambda}\|_q = 0\,.
        \end{equation*}       
\end{enumerate} 
Moreover, if $\partial \Omega$ is of class $C^2$, then $u_{\lambda} \in C_0^1(\overline{\Omega^e})$ and  $\frac{\partial u_{\lambda}}{\partial \eta} < 0$ on $\partial \Omega$, where $\eta$ is outer normal to $\partial \Omega$.
\end{theorem}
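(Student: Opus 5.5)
The plan is to apply Pohozaev's fibering method to $J_\lambda$. For $u\in X\setminus\{0\}$ and $t>0$ we have
\[
J_\lambda(tu)=\frac{t^p}{p}\Big(\|u\|_p^p-\lambda\int_{\Omega^e}K|u|^p\Big)+\frac{t^q}{q}\|u\|_q^q .
\]
We write $A_\lambda(u):=\|u\|_p^p-\lambda\int_{\Omega^e}K|u|^p$. The fiber map $t\mapsto J_\lambda(tu)$ has a nontrivial critical point exactly when $A_\lambda(u)<0$. That critical point is unique and given by
\[
t(u)^{q-p}=-A_\lambda(u)/\|u\|_q^q .
\]
It is a global minimum of the fiber when $p<q$ and a global maximum when $p>q$. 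Substituting, we obtain the reduced functional
\[
\widehat J_\lambda(u):=J_\lambda(t(u)u)=\Big(\frac1q-\frac1p\Big)\frac{(-A_\lambda(u))^{q/(q-p)}}{\|u\|_q^{qp/(q-p)}}\,\mathrm{sign}\ \text{(after simplification)},
\]
which is $0$-homogeneous. The set $\{A_\lambda<0\}$ is nonempty iff $\lambda>\lambda_1(p)$ by \eqref{def:eigen}, using $K\in L^{N/p}$ so that $K$-weighted integrals make sense on $W^{1,p}_0$. For $p<q$ we minimize $\widehat J_\lambda$, which is negative. For $p>q$ we minimize the corresponding positive expression, equivalently a minimax over fibers. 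In both cases we work on the constraint $\{\|u\|_q=1,\ A_\lambda(u)<0\}$. Because of homogeneity, we may equivalently maximize (resp.\ minimize) $-A_\lambda(u)$ over $\|u\|_q=1$, which becomes a conditional critical point problem.

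\textbf{Existence step.} Take a minimizing sequence normalized so that $\|u_n\|_q=1$ and $-A_\lambda(u_n)$ converges to its optimum. I would first show boundedness in $W^{1,p}_0$. For $p<q$ this is automatic since $-A_\lambda\le \lambda\int K|u|^p$ is dominated. Weak compactness of $u\mapsto \int K|u|^p$ on $D^{1,p}$ follows from $K\in L^{N/p}$: approximate $K$ by compactly supported truncations and use Rellich locally plus the Sobolev embedding into $L^{p^*}$ for the tails. Then the constraint $\|u\|_q=1$ might be lost in the limit, but $0$-homogeneity makes only the ratio matter. Weak lower semicontinuity of $\|\cdot\|_p$ and $\|\cdot\|_q$, together with continuity of the $K$-term, gives a minimizer $v$ with $A_\lambda(v)<0$; the nondegenerate optimum prevents $v=0$ since $\int K|v|^p>0$. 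Replacing $v$ by $|v|$, we set $u_\lambda=t(v)v\ge0$. A Lagrange multiplier computation then shows that $u_\lambda$ is a critical point of $J_\lambda$ on $X$. This uses that $\widehat J_\lambda$ is $C^1$ on the open set $\{A_\lambda<0\}$ and the fiber derivative vanishes, which is the standard fibering conclusion. For the converse, if $u>0$ is a weak solution, testing with $u$ gives $A_\lambda(u)=-\|u\|_q^q<0$, which forces $\lambda>\lambda_1(p)$. We also get strictness: $\lambda=\lambda_1(p)$ would force $\|u\|_q=0$.

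\textbf{Regularity and positivity.} Boundedness comes from Moser iteration or De Giorgi truncation for $(p,q)$-growth operators, using $K\in L^\infty$. Local $C^{1,\alpha}$ estimates (Lieberman) then give smoothness, and the strong maximum principle of V\'azquez type for $\psi_{p,q}$ gives $u_\lambda>0$. For the decay $u_\lambda\to0$ uniformly I would combine local $L^\infty$ estimates on unit balls, $\sup_{B_1(x)}u\le C\|u\|_{L^{p^*}(B_2(x))}$, with $u\in L^{p^*}$, so the right side tends to zero as $|x|\to\infty$. When $\partial\Omega\in C^2$, Lieberman's boundary regularity gives $C^{1}$ up to the boundary, and the Hopf lemma gives $\partial u_\lambda/\partial\eta<0$.

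\textbf{Asymptotics.} Set $m(\lambda):=\sup\{-A_\lambda(u):\|u\|_q=1\}$ with the appropriate normalization. Then $\|u_\lambda\|_q^{q-p}$ is comparable to a power of $m(\lambda)$, and
\[
J_\lambda(u_\lambda)=\Big(\frac1q-\frac1p\Big)\|u_\lambda\|_q^q .
\]
As $\lambda\to\lambda_1(p)^+$ one has $m(\lambda)\to0$, using the simplicity and isolatedness of $\lambda_1(p)$ from \cite{CD2014} to control normalized minimizers. As $\lambda\to\infty$ one has $m(\lambda)\to\infty$, by testing with a fixed $C_0^\infty$ function. The sign of $q-p$ in the exponent then yields (a) and (b). I expect the main obstacle to be the existence step when $p>q$. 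There the $q$-normalization does not directly bound $\|u\|_p$, so one needs a different normalization, namely $\int K|u|^p=1$, with the $p$-gradient controlled through the optimization itself. A further difficulty is ensuring that weak limits do not lose mass at infinity, which is exactly where $K\in L^{N/p}$ is used.
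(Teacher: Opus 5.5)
\paragraph{The gap is in your existence step.} You claim that for $p<q$ a maximizing sequence on $\{\|u\|_q=1\}$ is ``automatically'' bounded in $W^{1,p}_0(\Omega^e)$ because $-A_\lambda\le\lambda\int K|u|^p$. This is false on an exterior domain. Infinite measure rules out the bounded-domain H\"older bound $\|u\|_p\le C\|u\|_q$, and $\int K|u|^p$ is not bounded on the $q$-sphere. For example, take $\phi\in C_0^\infty$ supported in $\{1<|x|<2\}$ and $u_R(x)=R^{1-N/q}\phi(x/R)$, which lies in $\Omega^e$ for large $R$. Then:
\begin{itemize}
\item $\|u_R\|_q=\|\phi\|_q$, but $\|u_R\|_p^p=R^{N(1-p/q)}\|\phi\|_p^p\to\infty$;
\item if $K(x)=|x|^{-p-\varepsilon}$ near infinity with small $\varepsilon>0$ (admissible, since then $K\in L^{N/p}\cap L^\infty$), also $\int K|u_R|^p\sim R^{N(1-p/q)-\varepsilon}\to\infty$.
\end{itemize}
So even the finiteness of $m(\lambda)=\sup\{-A_\lambda(u):\|u\|_q=1\}$ needs an argument, in both cases. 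Your remark for $p>q$ (normalize by $\int K|u|^p$) names a normalization but not the mechanism.

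There is also a direction slip. For $p>q$ you must \emph{maximize} $-A_\lambda$ as well. On the $q$-sphere the reduced functional is $(\frac1q-\frac1p)(-A_\lambda)^{-q/(p-q)}$, which is decreasing in $-A_\lambda$. Minimizing $-A_\lambda$ over $\{A_\lambda<0\}$ only drives $A_\lambda\to0^-$. Your later definition of $m(\lambda)$ as a supremum is the correct one.

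\paragraph{The missing idea.} The paper uses a contradiction argument, identical for $p<q$ and $p>q$. Suppose $\|v_n\|_q=1$, $A_\lambda(v_n)<0$ and $\|v_n\|_p\to\infty$ (equivalently $\int K|v_n|^p\to\infty$). Set $w_n=v_n/\|v_n\|_p$.
\begin{itemize}
\item Since $\|w_n\|_q=1/\|v_n\|_p\to0$, the weak limit $w_0$ in $W^{1,p}_0(\Omega^e)$ must vanish.
\item But $\lambda\int K|w_n|^p>1$, and the compactness of $u\mapsto\int K|u|^p$ (which you correctly derive from $K\in L^{N/p}$) gives $\int K|w_0|^p\ge1/\lambda$.
\end{itemize}
This yields both $m(\lambda)<\infty$ and boundedness of maximizing sequences. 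After that, your weak-limit plus $0$-homogeneity argument works.

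The same point is what is really needed for $\lambda\to\lambda_1(p)^+$. Run the argument with $\lambda_n\in(\lambda_1(p),\Lambda)$ to get $\int K|v_\lambda|^p\le c_\Lambda$ uniformly. Then $0<m(\lambda)\le(\lambda-\lambda_1(p))c_\Lambda\to0$, using $\|v\|_p^p\ge\lambda_1(p)\int K|v|^p$. Simplicity and isolatedness of $\lambda_1(p)$ play no role and do not by themselves give this uniform bound.

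With these repairs, the rest of your plan agrees with the paper: the converse by testing with $u$, regularity and positivity, the limit $\lambda\to\infty$ via a fixed test function, and reading off (a) and (b) from $\|u_\lambda\|_q^{q-p}=m(\lambda)$ and $J_\lambda(u_\lambda)=(\frac1q-\frac1p)\|u_\lambda\|_q^q$.
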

%%%%%%%%%%%%%%%%%%%%%%%%%%%%%%%%%%%%%%%%%%%%%%%
\bigskip
\begin{figure}[ht]
\centering

% ---- J_lambda FIRST ----
\begin{minipage}[t]{0.48\textwidth}
\centering
\begin{tikzpicture}[scale=1.1]
\useasboundingbox (0,0) rectangle (5,4);

\draw[->] (0,3.5) -- (5,3.5) node[right] {$\lambda$};
\draw[->] (0,0) -- (0,4) node[above] {$J_{\lambda}(u_{\lambda})$};

\draw[dashed] (1,3.5) -- (1,0);
\draw[thick, domain=1:5, samples=100]
    plot (\x,{ -sqrt(\x-1) +3.5});

\fill (1,3.5) circle (2pt);
\node[above right] at (1,3.5) {$\lambda_1(p)$};
\end{tikzpicture}
\end{minipage}
\hfill
% ---- ||u||_q SECOND ----
\begin{minipage}[t]{0.48\textwidth}
\centering
\begin{tikzpicture}[scale=1.1]
\useasboundingbox (0,0) rectangle (5,4);

\draw[->] (0,0) -- (5,0) node[right] {$\lambda$};
\draw[->] (0,0) -- (0,4) node[above] {$\|u_{\lambda}\|_q$};

\draw[dashed] (1,0) -- (1,4);
\draw[thick, domain=1:5, samples=100]
    plot (\x,{sqrt(\x-1)});

\fill (1,0) circle (2pt);
\node[above right] at (1.1,0) {$\lambda_1(p)$};
\end{tikzpicture}
\end{minipage}

\caption{Qualitative behaviors of $J_{\lambda}(u_{\lambda})$ and $\|u_{\lambda}\|_q$
as functions of $\lambda$ when $p<q$.}
\label{fig:p<q}
\end{figure}
%%%%%%%%%%%%%%%%%%%%%%%%
\bigskip
\begin{figure}[ht]
\centering

% ---- J_lambda FIRST ----
\begin{minipage}[t]{0.48\textwidth}
\centering
\begin{tikzpicture}[scale=1.1]
\useasboundingbox (0,0) rectangle (5,4);

\draw[->] (0,0) -- (5,0) node[right] {$\lambda$};
\draw[->] (0,0) -- (0,4) node[above] {$J_{\lambda}(u_{\lambda})$};

\draw[dashed] (1,0) -- (1,4);
\begin{scope}
  \clip (0,0) rectangle (5,4);
  \draw[thick, domain=1.05:5, samples=200]
      plot (\x,{1/(\x-1)});
\end{scope}

\fill (1,0) circle (2pt);
\node[above] at (1.6,0) {$\lambda_1(p)$};
\end{tikzpicture}
\end{minipage}
\hfill
% ---- ||u||_q SECOND ----
\begin{minipage}[t]{0.48\textwidth}
\centering
\begin{tikzpicture}[scale=1.1]
\useasboundingbox (0,0) rectangle (5,4);

\draw[->] (0,0) -- (5,0) node[right] {$\lambda$};
\draw[->] (0,0) -- (0,4) node[above] {$\|u_{\lambda}\|_q$};

\draw[dashed] (1,0) -- (1,4);
\begin{scope}
  \clip (0,0) rectangle (5,4);
  \draw[thick, domain=1.05:5, samples=200]
      plot (\x,{1/(\x-1)});
\end{scope}

\fill (1,0) circle (2pt);
\node[above] at (1.6,0) {$\lambda_1(p)$};
\end{tikzpicture}
\end{minipage}

\caption{Qualitative behaviors of $J_{\lambda}(u_{\lambda})$ and $\|u_{\lambda}\|_q$
as functions of $\lambda$ when $p>q$.}
\label{fig:p>q}
\end{figure}

%%%%%%%%%%%%%%%%%%%%%%%%%%%%%%%%%%%%%

We note that $(\lambda, u_{\lambda})$ can be regarded as the principal eigenpair (i.e., the principal eigenvalue and the corresponding principal eigenfunction) of the nonhomogeneous eigenvalue problem \eqref{pde}. Figure~\ref{fig:p<q} and Figure~\ref{fig:p>q} summarize the nonexistence, existence results, and the asymptotic properties described in Theorem~\ref{theo:main}.

\par For results on the existence, nonexistence, and regularity of solutions in bounded domains, we refer the reader to \cite{Bob-Tana_Calc-var, Bob-Tan-2018, Bob-Tan-2022}. See \cite{Mihai-Stan-2016} for existence and nonexistence of weak solutions for the $(2, q)$ - Laplacian in a general open set in $\RR^N$, which was later extended in \cite{Grecu2022} for the $(p, q)$ - Laplacian case in exterior domains. However, in both \cite{ Mihai-Stan-2016} and \cite{Grecu2022}, the positivity of the solution, the regularity, or the asymptotic behavior of the solution when $\lambda \to \lambda_1(p)^+$ and $\lambda \to \infty$ were not investigated. These are among the primary focuses addressed in this paper. In fact, our proofs of the asymptotic behavior of the solution as  $\lambda \to \lambda_1(p)^+$ and  $\lambda \to \infty$, can be carried over to  bounded domains, thus further contributing to the literature in this case as well.

\par We may also view problem \eqref{pde} from the point of bifurcation with the term $-\Delta_qu$ as the perturbation of the eigenvalue problem $-\Delta_p u = \lambda K(x)|u|^{p-2}u$. In particular, for $p<q$,  the perturbation represents the "higher order term" and the $(\lambda, \|u\|_q)$ branch
in Fig.~\ref{fig:p<q} corresponds to the bifurcation from zero. On the other hand, 
for $p>q$, the perturbation represents  the "lower order term" and the $(\lambda, \|u\|_q)$ branch
in Fig.~\ref{fig:p>q} corresponds to the bifurcation from infinity. This interpretation is in alignment with the classical bifurcation theory.

\par To establish our results, we use the \emph{fibering method} of \cite{Drabek-Pohozaev}, which goes back to the seminal works of Pohozaev; see, for instance, \cite{Pohojaev-1979,  pohozaev-99,   pohozaev-97, Pohojaev-1988, Pohozaev-1990}. The fibering functional and the fibering method are also mentioned in \cite{Bob-Tana_Calc-var, Bob-Tan-2018, Bob-Tan-2022}, in connection with the study of $(p, q)$- Laplacian problems in bounded domain case. This method provides a unified approach to deal with our variational problem, which involves a functional that may or may not be coercive. Further, compared to those  available in the current literature for $(p,q)$ - Laplacian, which are based on the notion of the \emph{Nehari} manifold, the fibering method yields explicit information on the dependence of both the energy and the norm of the solutions in $W^{1,q}_0(\Omega^e)$ as $\lambda \to \lambda_1(p)^+$ and  $\lambda \to \infty$, a main focus of this paper.

%%%%%%%%%%%%%%%%%%%%%%%%
\smallskip
The paper is organized as follows. 
In Section~\ref{sec:fiber}, we recall the fibering method and introduce a fibered functional whose conditional critical points on a suitable manifold correspond to critical points of the functional associated with~\eqref{pde}.
The proof of Theorem~\ref{theo:main} is then developed over Sections~\ref{sec:fiber}--\ref{sec:qual:prop}.  Sections~\ref{sec:p<q} and~\ref{sec:p>q} are devoted to establishing the existence of minimizers of this fibered functional in the cases $p<q$ and $p>q$, respectively. In Sections~\ref{sec:asympt} and~\ref{sec:qual:prop}, we derive the asymptotic behavior and qualitative (regularity) properties of the corresponding solutions, completing the proof of Theorem~\ref{theo:main}.

\par Finally, in Section~\ref{sec:final}, we discuss the advantages of using the fibering method approach here compared to the method based on the notion of the {\em Nehari} manifold used in the recent literature.

\section{Fibering method}
\label{sec:fiber}
We begin by discussing facts that will be essential for the subsequent analysis and estimates. The infimum in \eqref{def:eigen} is achieved at $\varphi_1 \in W^{1,p}_0(\Omega^e)$. Since $X \subsetneq W^{1,p}_0(\Omega^e)$ the principal eigenfunction $\varphi_1$ need not belong to $X$. However, \eqref{def:eigen} implies that 
\begin{equation}
    \label{char:lambda:1:p}
    \int_{\Omega^e}|\nabla u|^p \geq \lambda_1(p) \int_{\Omega^e}K(x)|u|^p \quad \mbox{for all}\quad u \in X\,,
\end{equation}
since it holds for all $u \in W^{1,p}_0(\Omega^e)$. 
Further, due to \cite[Lem.~2]{Grecu2022}, $\lambda_1(p)$ also has the following useful variational characterization.
\begin{lemma}
    \label{lem:Grecu:lem2}
Define
\begin{equation}
\label{eval:1:grecu}
\mu_1(p,q):=\inf\limits_{\substack{ u \in X \\
    u \neq 0}}\frac{\frac{1}{p}\int_{\Omega^e}|\nabla u|^p + \frac{1}{q}\int_{\Omega^e}|\nabla u|^q}{\frac{1}{p}\int_{\Omega^e}K(x)|u|^p}\,.
\end{equation}
Then $\mu_1(p,q)=\lambda_1(p)$.
\end{lemma}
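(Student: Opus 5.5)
We prove the two inequalities $\mu_1(p,q)\ge\lambda_1(p)$ and $\mu_1(p,q)\le\lambda_1(p)$ separately. The first is immediate from \eqref{char:lambda:1:p}: for every $u\in X\setminus\{0\}$,
$$
\frac{\frac1p\int_{\Omega^e}|\nabla u|^p+\frac1q\int_{\Omega^e}|\nabla u|^q}{\frac1p\int_{\Omega^e}K|u|^p}\ \ge\ \frac{\int_{\Omega^e}|\nabla u|^p}{\int_{\Omega^e}K|u|^p}\ \ge\ \lambda_1(p),
$$
and taking the infimum over $u$ gives $\mu_1(p,q)\ge\lambda_1(p)$.

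For the reverse inequality, we exploit that the quotient in \eqref{eval:1:grecu} is not homogeneous. Scaling $u\mapsto tu$ multiplies the $p$-terms by $t^p$ and the $q$-term by $t^q$. The quotient therefore equals
$$
\frac{\int|\nabla u|^p}{\int K|u|^p}+\frac{p}{q}\,t^{q-p}\frac{\int|\nabla u|^q}{\int K|u|^p}.
$$
If $q>p$ we let $t\to0^+$, and if $q<p$ we let $t\to\infty$. Either way the second term vanishes, so $\mu_1(p,q)\le \int|\nabla u|^p/\int K|u|^p$ for every $u\in X\setminus\{0\}$. It therefore suffices to show
$$
\inf_{u\in X\setminus\{0\}}\frac{\int|\nabla u|^p}{\int K|u|^p}=\lambda_1(p).
$$

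This reduces to a density argument, which is the main obstacle. By definition $W_0^{1,p}(\Omega^e)$ is the closure of $C_0^\infty(\Omega^e)$ in $\|\cdot\|_p$, and $C_0^\infty(\Omega^e)\subset X$ because compactly supported smooth gradients lie in every $L^r$. Given $\varepsilon>0$, we pick $v\in W_0^{1,p}(\Omega^e)$ whose Rayleigh quotient is within $\varepsilon$ of $\lambda_1(p)$; one can take $v=\varphi_1$. We then choose $v_n\in C_0^\infty(\Omega^e)$ with $v_n\to v$ in $W_0^{1,p}(\Omega^e)$.

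It remains to check that $\int K|v_n|^p\to\int K|v|^p\neq0$. Since $p<N$, the Sobolev embedding $W_0^{1,p}(\Omega^e)\hookrightarrow L^{p^*}(\Omega^e)$ holds for the homogeneous space on an exterior domain. Hölder's inequality with $K\in L^{N/p}$ then gives
$$
\int K|w|^p\le \|K\|_{L^{N/p}}\|w\|_{L^{p^*}}^p,
$$
so $w\mapsto\int K|w|^p$ is continuous on $W_0^{1,p}(\Omega^e)$. Consequently the Rayleigh quotients of $v_n$ converge to that of $v$. Since each $v_n\in X$, the infimum over $X$ is at most $\lambda_1(p)+\varepsilon$, and hence $\mu_1(p,q)\le\lambda_1(p)$. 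The only care needed is the continuity of the denominator, which requires $K\in L^{N/p}$ together with the Sobolev inequality; everything else is elementary.
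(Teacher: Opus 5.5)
Your proof is correct. The paper does not prove this lemma itself; it imports it from \cite[Lem.~2]{Grecu2022}. Your argument is therefore a self-contained substitute rather than a variant of anything in the text.

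You isolate the two halves correctly:
\begin{itemize}
\item $\mu_1(p,q)\ge\lambda_1(p)$ is just \eqref{char:lambda:1:p} after dropping the nonnegative $q$-term.
\item For $\mu_1(p,q)\le\lambda_1(p)$, the scaling $u\mapsto tu$ does the work, with $t\to0^+$ if $p<q$ and $t\to\infty$ if $p>q$; this is exactly where $p\neq q$ enters. It reduces everything to the identity $\inf_{u\in X\setminus\{0\}}\int_{\Omega^e}|\nabla u|^p/\int_{\Omega^e}K|u|^p=\lambda_1(p)$.
\item You obtain that identity from two facts. First, $C_0^\infty(\Omega^e)\subset X$ is dense in $W_0^{1,p}(\Omega^e)$. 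Second, $w\mapsto(\int_{\Omega^e}K|w|^p)^{1/p}$ is Lipschitz on $W_0^{1,p}(\Omega^e)$, by H\"older with $K\in L^{N/p}(\Omega^e)$ and the Sobolev embedding into $L^{p^*}$, which is where $p<N$ is used. The same convergence of denominators to a positive limit also gives $v_n\neq0$ for large $n$.
\end{itemize}

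What your route buys is transparency. It shows the $q$-term plays no role in the value of the infimum. It does not need attainment of $\lambda_1(p)$, so taking $v=\varphi_1$ is optional. And the intermediate identity you prove is exactly what the paper actually uses later, in \eqref{char:lambda:1:p:reverse} and \eqref{char:lambda:tilda}. The citation buys only brevity.
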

As a consequence of Lemma~\ref{lem:Grecu:lem2},   given $\lambda > \lambda_1(p)$, there exists $\varphi_{\lambda} \in X$ such that 
\begin{equation}
    \label{char:lambda:1:p:reverse}
    \int_{\Omega^e}|\nabla \varphi_{\lambda}|^p < \lambda\int_{\Omega^e}K(x)|\varphi_{\lambda}|^p\,.
\end{equation}

\medskip

\par We employ the unified framework based on the \emph{fibering method}, as developed in \cite{Drabek-Pohozaev}, to analyze the functional $J_{\lambda}$. For this, defining $H_{\lambda}, G : X \to \RR$ by 
\begin{equation}
    \label{H}
    H_{\lambda}(u):= \int_{\Omega^e}|\nabla u|^p - \lambda\int_{\Omega^e}K(x)|u|^p
\end{equation}
and 
\begin{equation}
    \label{G}
   G(u):= \int_{\Omega^e}|\nabla u|^q
\end{equation}
respectively, we have 
\begin{equation}
\label{J:H:G}
    J_{\lambda}(u) = \frac{1}{p}H_{\lambda}(u) + \frac{1}{q}G(u) \quad \mbox{ for all}\quad u \in X\,.
\end{equation}
%%%%%%%%%%%%%%%%
\medskip

It follows from \eqref{G} that $G(u)>0$, for $u \neq 0$, and hence the sign  of $H_{\lambda}$ given by \eqref{H} plays a crucial role to study $J_{\lambda}$ defined in \eqref{J:H:G}. For this purpose, we define the sets
\begin{equation}
    \label{set:H:minus}
\mathcal{H}_{\lambda}^-:= \left\{u \in X \setminus \{0\}: H_{\lambda}(u) <0 \right\}
\end{equation}
and 
\begin{equation}
    \label{set:H:plus}
\mathcal{H}_{\lambda}^+:= X \setminus \mathcal{H}_{\lambda}^- =  \left\{u \in X: H_{\lambda}(u) \geq 0 \right\}\,.
\end{equation}
To discuss the {\em fibering method}, we represent $ u \in X \setminus\{0\}$ in the form:
\begin{equation}
\label{split}
u(x)=tv(x)    \quad \mbox{for}\quad t \in \RR\,,v \in X\, \mbox{ with } t \neq 0, \, v \neq 0\,.
\end{equation}
Substituting \eqref{split} into \eqref{energy}, we get the \emph{fibered functional}
\begin{equation}
\label{fibrer}
    J_{\lambda}(tv) = 
    \frac{|t|^p}{p}\int_{\Omega^e}|\nabla v|^p + 
     \frac{|t|^q}{q}\int_{\Omega^e}|\nabla v|^q -  \lambda\frac{|t|^p}{p}\int_{\Omega^e}K(x)|v|^p\,.
\end{equation}
Let $u \in X\setminus\{0\},\ u=tv$, be a nonzero critical point of $J_{\lambda}$. Then, necessarily 
\begin{align}
0=& \frac{\partial}{\partial t}J_{\lambda}(tv) \nonumber\\
= &  |t|^{p-2}t\int_{\Omega^e}|\nabla v|^p + 
      |t|^{q-2}t\int_{\Omega^e}|\nabla v|^q -  \lambda|t|^{p-2}t\int_{\Omega^e}K(x)|v|^p \nonumber\\
      = & |t|^{p-2}tH_{\lambda}(v) +  |t|^{q-2}t G(v)\,.
      \label{derivative:fiber:1}
\end{align}

\smallskip

Let $\lambda \leq \lambda_1(p)$. Then by \eqref{char:lambda:1:p},
we have $\mathcal{H}_{\lambda}^- = \emptyset$ and due to the fact that $H_{\lambda}(v) \geq 0$, $G(v) >0$, \eqref{derivative:fiber:1} cannot hold for any $t \neq 0$. Therefore, $J_{\lambda}$ does not have a nonzero critical point.

\smallskip

\par On the other hand, if $\lambda > \lambda_1(p)$ then it follows from \eqref{char:lambda:1:p:reverse} that $\varphi_{\lambda} \in \mathcal{H}_{\lambda}^-$ and thus the set $\mathcal{H}_{\lambda}^-$ is nonempty. Then for any $v \in \mathcal{H}_{\lambda}^-$, 
\eqref{derivative:fiber:1} has a solution $t_v$ satisfying 
\begin{equation}
\label{def:t(v)}
 |t_v|:=\left(\frac{|H_{\lambda}(v)|}{G(v)}\right)^{\frac{1}{q-p}} >0\,.
\end{equation}
Substituting \eqref{def:t(v)} into \eqref{fibrer}, and using the fact that $v \in \mathcal{H}_{\lambda}^-$, the fibered functional takes the form:
\begin{align}
        \widetilde{J}_{\lambda}(v):=J_{\lambda}(t_vv)&= \frac{|t_v|^p}{p}H_{\lambda}(v) + \frac{|t_v|^q}{q}G(v) \nonumber\\
    &= -\frac{1}{p}\left(\frac{|H_{\lambda}(v)|}{G(v)}\right)^{\frac{p}{q-p}}|H_{\lambda}(v)| +  \frac{1}{q}\left(\frac{|H_{\lambda}(v)|}{G(v)}\right)^{\frac{q}{q-p}}G(v) \nonumber\\
    &= \left(\frac{1}{q}-\frac{1}{p}\right)\frac{|H_{\lambda}(v)|^{\frac{q}{q-p}}}{ [G(v)]^{\frac{p}{q-p}}}\,.
    \label{fiber:crit:val}
\end{align}
We observe that the fibered functional  $\widetilde{J}_{\lambda}$ has the following properties:
\begin{enumerate}[label=(\roman*)]
    \item it is $0$-homogeneous, i.e., for every $v \in \mathcal{H}_{\lambda}^-$ and every $t \in \RR \setminus\{0\}$, we have
    \begin{equation*}
        \widetilde{J}_{\lambda}(tv)=\widetilde{J}_{\lambda}(v)\,;
    \end{equation*}
    \item it is even and the Gateaux derivative of $\widetilde{J}_{\lambda}$ at the point $v \in \mathcal{H}_{\lambda}^-$ in the direction of $v$ is zero, i.e.,
    \begin{equation*}
        \left< \widetilde{J}_{\lambda}'(v), v\right> =0\,;
    \end{equation*}
    \item it follows from \eqref{fiber:crit:val} that if $v_{\lambda} \in \mathcal{H}_{\lambda}^-$ is a minimizer of $\widetilde{J}_{\lambda}$ then also $|v_{\lambda}| \in \mathcal{H}_{\lambda}^-$ is a minimizer of $\widetilde{J}_{\lambda}$ (cf. Section~\ref{sec:qual:prop}). In particular, we can automatically assume that $v_{\lambda}$ is nonnegative in $\Omega^e$.
\end{enumerate}

\medskip
\par The following lemma clarifies the relationship between the critical points of the functional $J_{\lambda}$ and the  fibered functional $\widetilde{J}_{\lambda}$.
\begin{lemma}
\label{lem:c.p:fiber}
    Every critical point $v_{\lambda} \in \mathcal{H}_{\lambda}^-$  of $\widetilde{J}_{\lambda}$ generates the critical point $u_{\lambda} \in X$ of $J_{\lambda}$ given by the formula $u_{\lambda}(x)=t_{v_{\lambda}}v_{\lambda}(x)$, where $t_{v_{\lambda}} \neq 0$ is given by \eqref{def:t(v)}.
\end{lemma}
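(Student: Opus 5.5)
The plan is a direct computation of the derivative of $\widetilde{J}_{\lambda}$ in terms of $J_{\lambda}'$. First we note that $\mathcal{H}_{\lambda}^-$ is open in $X$, since $H_{\lambda}$ and $G$ are continuous on $X$ (indeed $C^1$, by the embeddings $X\hookrightarrow W^{1,p}_0(\Omega^e)$, $X\hookrightarrow W^{1,q}_0(\Omega^e)$ and $K\in L^{\infty}\cap L^{N/p}$). Hence $\widetilde{J}_{\lambda}$ is defined on an open set. On this set $G>0$ and $H_{\lambda}<0$, so the map $v\mapsto t(v):=\left(|H_{\lambda}(v)|/G(v)\right)^{\frac{1}{q-p}}$ is $C^1$ there. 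Taking the positive branch is harmless, because $\widetilde J_\lambda$ and $J_\lambda$ are even.

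Next, write $\widetilde{J}_{\lambda}(v)=J_{\lambda}(t(v)v)$ and apply the chain rule. For $v\in\mathcal{H}_{\lambda}^-$ and $\phi\in X$,
\begin{equation*}
\left<\widetilde{J}_{\lambda}'(v),\phi\right>=\left<J_{\lambda}'(t(v)v),\,t(v)\phi\right>+\left<t'(v),\phi\right>\,\left<J_{\lambda}'(t(v)v),v\right>.
\end{equation*}
The last factor is exactly $\frac{\partial}{\partial t}J_{\lambda}(tv)\big|_{t=t(v)}$. By \eqref{derivative:fiber:1} and the definition \eqref{def:t(v)}, this equals $t^{p-1}H_{\lambda}(v)+t^{q-1}G(v)=0$. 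Hence
\begin{equation*}
\left<\widetilde{J}_{\lambda}'(v),\phi\right>=t(v)\left<J_{\lambda}'(t(v)v),\phi\right>\quad\text{for all }\phi\in X.
\end{equation*}

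If $v_{\lambda}$ is a critical point of $\widetilde{J}_{\lambda}$, the left side vanishes for all $\phi$. Since $t(v_{\lambda})>0$, we get $J_{\lambda}'(t_{v_{\lambda}}v_{\lambda})=0$. Thus $u_{\lambda}=t_{v_{\lambda}}v_{\lambda}$ is a critical point of $J_{\lambda}$, i.e.\ a weak solution in the sense of Definition~\ref{def:soln:weak}, and it is nonzero.

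The only delicate point is justifying the differentiability of $\widetilde{J}_{\lambda}$ and of $t(\cdot)$ on $\mathcal{H}_{\lambda}^-$. This requires that $H_{\lambda}$ and $G$ be $C^1$ on $X$, which follows from the $C^1$ property of $J_{\lambda}$ already recorded in \eqref{deri:functional}, separated into its $p$- and $q$-parts. One also needs $H_{\lambda}$ and $G$ to stay bounded away from zero near $v$, which follows from openness. If one prefers to avoid differentiating $t(\cdot)$, an alternative is to differentiate \eqref{fiber:crit:val} directly. This gives the derivative as a combination of $H_{\lambda}'(v)$ and $G'(v)$ with coefficients that, after substituting $t(v)$, reduce to $t(v)^{p-1}H_{\lambda}'(v)/p+t(v)^{q-1}G'(v)/q$ up to a positive factor, which is the same conclusion.
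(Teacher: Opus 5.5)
Your proof is correct. Your main argument takes a different route from the paper's, and your fallback is exactly the paper's route.

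\textbf{The paper's route.} It differentiates the closed form \eqref{fiber:crit:val},
$\widetilde J_\lambda(v)=\bigl(\frac1q-\frac1p\bigr)|H_\lambda(v)|^{\frac{q}{q-p}}G(v)^{-\frac{p}{q-p}}$, directly. It then checks by algebra that the coefficients of $H_\lambda'(v)$ and $G'(v)$ reduce to $\frac1p|t_v|^{p}$ and $\frac1q|t_v|^{q}$. This gives $\langle\widetilde J_\lambda'(v),\phi\rangle=|t_v|\langle J_\lambda'(t_vv),\phi\rangle$.

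\textbf{Your route.} You write $\widetilde J_\lambda(v)=J_\lambda(t(v)v)$ and apply the chain rule. The term containing $t'(v)$ vanishes because $t(v)$ is a critical point of the fiber map $t\mapsto J_\lambda(tv)$; this is an envelope-theorem argument. You reach the same identity.

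\textbf{Comparison.} Your version explains why the coefficients collapse. It would also work when $t_v$ is only defined implicitly, for instance via the implicit function theorem when $\partial_t^2J_\lambda(tv)\neq0$ at $t=t_v$. The cost is that you must justify differentiability of $v\mapsto t(v)$. Here that is immediate from its closed form, since $-H_\lambda>0$ and $G>0$ on the open set $\mathcal H_\lambda^-$. The paper's computation is purely algebraic and needs only $H_\lambda,G\in C^1$, but it relies on the explicit formula available in this homogeneous setting.

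\textbf{A small wording point.} The $C^1$ property of $J_\lambda$ does not by itself give that of its parts. It is cleaner to note that $G$ is $C^1$ on $X$ via $X\hookrightarrow W^{1,q}_0(\Omega^e)$, and hence $H_\lambda=pJ_\lambda-\frac pq G$ is $C^1$ as well.
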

\begin{proof}
    Let $v_{\lambda} \in \mathcal{H}_{\lambda}^-$ be a critical point of   $\widetilde{J}_{\lambda}$. Then 
    \begin{equation*}
        \widetilde{J}_{\lambda}'(v_{\lambda})=0\,.
    \end{equation*}
Then, for any $\phi \in X$, it follows from \eqref{def:t(v)} and \eqref{fibrer} that  
\begin{align*}
0= \left<\widetilde{J}_{\lambda}'(v_{\lambda}), \phi \right>
= &\left(\frac{1}{q} - \frac{1}{p}\right) 
\left[
\frac{-q}{q-p}|H_{\lambda}(v_{\lambda})|^{\frac{p}{q-p}}G(v_{\lambda})^{\frac{-p}{q-p}}
\left<H_{\lambda}'(v_{\lambda}), \phi\right>  \right. \\
&\qquad - \left.\frac{p}{q-p}|H_{\lambda}(v_{\lambda})|^{\frac{q}{q-p}}G(v_{\lambda})^{\frac{-q}{q-p}}
\left<G'(v_{\lambda}), \phi\right>
\right]  \\
= & \frac{1}{p}\left(\frac{|H_{\lambda}(v_{\lambda})|}{G(v_{\lambda})}\right)^{\frac{p}{q-p}}\left<H_{\lambda}'(v_{\lambda}), \phi\right> + \frac{1}{q}\left(\frac{|H_{\lambda}(v_{\lambda})|}{G(v_{\lambda})}\right)^{\frac{q}{q-p}}\left<G'(v_{\lambda}), \phi\right>  \\
= & |t_{v_{\lambda}}|
\left[
\frac{1}{p}|t_{v_{\lambda}}|^{p-1} \left<H_{\lambda}'(v_{\lambda}), \phi\right> +
\frac{1}{q}|t_{v_{\lambda}}|^{q-1}\left<G'(v_{\lambda}), \phi\right> 
\right] \nonumber \\
= &  |t_{v_{\lambda}}| \left<{J}_{\lambda}'(t_{v_{\lambda}}v_{\lambda}), \phi\right>\,, 
\end{align*}
that is, ${J}_{\lambda}'(u_{\lambda})=0$ since $t_{v_{\lambda}} \neq 0$. This completes the proof.
\end{proof}
Next we show that the critical point $v_{\lambda}$ of $\widetilde{J}_{\lambda}$ can be found using the conditional variational problem associated with $\widetilde{J}_{\lambda}$.
\begin{lemma}
    \label{lem:cond:c.p.} 
Let us consider the constraint set 
\begin{equation}
\label{set:W:lambda}    
\mathcal{W}_{\lambda}:= \left\{ v \in \mathcal{H}_{\lambda}^-: \, G(v)=1\right\}\,.
\end{equation} 
Then every conditional critical point of the functional $\widetilde{J}_{\lambda}$ restricted to $\mathcal{W}_{\lambda}$ is a usual critical point of $\widetilde{J}_{\lambda}$.
\end{lemma}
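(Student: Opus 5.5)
The plan is to use the Lagrange multiplier rule on the constraint $G(v)=1$, and then remove the multiplier by exploiting the $0$-homogeneity of $\widetilde{J}_{\lambda}$.

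First I note that $\mathcal{H}_{\lambda}^-$ is an open subset of $X$. Indeed, $H_{\lambda}$ is continuous on $X$: the term $\int|\nabla u|^p$ is continuous because $X\hookrightarrow W_0^{1,p}(\Omega^e)$. The term $\int K|u|^p$ is continuous because $K\in L^{N/p}(\Omega^e)$ and $W^{1,p}_0(\Omega^e)\hookrightarrow L^{p^*}(\Omega^e)$, via Hölder's inequality. Next, $G$ and $H_{\lambda}$ are $C^1$ on $X$. Since $G(v)=1>0$ on $\mathcal{W}_\lambda$ and $|H_\lambda|=-H_\lambda$ on $\mathcal{H}_{\lambda}^-$, formula \eqref{fiber:crit:val} shows that $\widetilde{J}_{\lambda}$ is $C^1$ on $\mathcal{H}_{\lambda}^-$.

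I then check that $\mathcal{W}_{\lambda}$ is a $C^1$ manifold of codimension one in the open set $\mathcal{H}_{\lambda}^-$. It suffices that $G'(v)\neq 0$ for $v\in\mathcal{W}_\lambda$. This holds because
\[
\langle G'(v),v\rangle=q\int_{\Omega^e}|\nabla v|^q=qG(v)=q\neq 0 .
\]

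Let $v\in\mathcal{W}_\lambda$ be a conditional critical point. By the Lagrange multiplier rule there is $\mu\in\RR$ with
\[
\langle\widetilde{J}_{\lambda}'(v),\phi\rangle=\mu\langle G'(v),\phi\rangle \quad\text{for all } \phi\in X .
\]
Testing with $\phi=v$ gives $\langle\widetilde{J}_{\lambda}'(v),v\rangle=\mu q$. By property (ii), the left-hand side vanishes. I would justify (ii) directly: since $t\mapsto\widetilde{J}_{\lambda}(tv)$ is constant for $t$ near $1$, differentiating at $t=1$ gives zero. Hence $\mu=0$, so $\widetilde{J}_{\lambda}'(v)=0$ in $X^*$, and $v$ is a usual critical point of $\widetilde{J}_{\lambda}$.

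The only delicate point is making the Lagrange multiplier rule rigorous. This requires the openness of $\mathcal{H}_\lambda^-$, the $C^1$ regularity of $\widetilde{J}_\lambda$ there, and the surjectivity of $G'(v)$ onto $\RR$. The last of these is exactly the identity $\langle G'(v),v\rangle=q$, so none of the steps should cause real difficulty.
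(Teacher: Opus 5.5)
Your argument is correct and is essentially the paper's proof. The paper writes the multiplier rule in the Fritz John form $\mu_1\widetilde{J}_{\lambda}'(v_\lambda)=\mu_2 G'(v_\lambda)$ with $\mu_1^2+\mu_2^2>0$, tests with $v_\lambda$, and uses property (ii) together with $\langle G'(v_\lambda),v_\lambda\rangle=q\neq 0$ to force $\mu_2=0$ and $\mu_1\neq 0$; you use the same identity first to get $G'(v)\neq 0$ and hence a single multiplier $\mu$, then kill $\mu$ the same way.

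Your added justifications (openness of $\mathcal{H}_\lambda^-$, $C^1$ regularity, and (ii) via $0$-homogeneity) are sound. One small nit: $C^1$ regularity of $\widetilde{J}_\lambda$ on all of $\mathcal{H}_\lambda^-$ rests on $G(v)>0$ for every $v\neq 0$, not merely on $G=1$ along $\mathcal{W}_\lambda$.
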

\begin{proof}
    Let $v_{\lambda}$ be the conditional critical point of  $\widetilde{J}_{\lambda}$ restricted to $\mathcal{W}_{\lambda}$. It follows from the Lagrange multiplier method that there exist real numbers $\mu_i$, $i=1,2$ such that 
    \begin{equation}
        \label{lagrange:1}
        \mu_1 \widetilde{J}_{\lambda}'(v_{\lambda}) = \mu_2 G'(v_{\lambda})\,,
    \end{equation}
    where
    \begin{equation}
    \label{lagrange:2}
        \mu_1^2 + \mu_2^2 >0\,.
    \end{equation}
    It follows from \eqref{lagrange:1} that 
    \begin{equation}
        \label{lagrange:3}
        \mu_1 \left<\widetilde{J}_{\lambda}'(v_{\lambda}), v_{\lambda} \right> = \mu_2 \left<G'(v_{\lambda}), v_{\lambda} \right>\,.
    \end{equation}
But $\left<\widetilde{J}_{\lambda}'(v_{\lambda}), v_{\lambda} \right>=0$ by property (ii) of $\widetilde{J}_{\lambda}$ while  $\displaystyle\left<G'(v_{\lambda}), v_{\lambda} \right> = q \int_{\Omega^e}|\nabla v_{\lambda}|^q = q\neq 0$. Then it follows from \eqref{lagrange:2} and \eqref{lagrange:3} that $\mu_1 \neq 0$ and $\mu_2=0$.  Hence \eqref{lagrange:3} implies that $\widetilde{J}_{\lambda}'(v_{\lambda}) =0$, completing the proof.
\end{proof}
\par Combining Lemma~\ref{lem:c.p:fiber} and Lemma~\ref{lem:cond:c.p.}, we will look for the existence of minimizer of $\widetilde{J}_{\lambda}$
subject to the constraint $\mathcal{W}_{\lambda}$. For this purpose, we have to distinguish between the following two qualitatively different cases: $p < q$ and $p > q$.
\par We note that the fibered functional $\widetilde{J}_{\lambda}$ in \eqref{fiber:crit:val}, restricted to the constraint $\mathcal{W}_{\lambda}$, given by \eqref{set:W:lambda}, takes the form 
\begin{equation}
    \label{fiber:on:W:lambda}
    \widetilde{J}_{\lambda}(v)=\left(\frac{1}{q} - \frac{1}{p}\right)|H_{\lambda}(v)|^{\frac{q}{q-p}} \qquad \mbox{for}\quad v \in \mathcal{W}_{\lambda}\,.
\end{equation}
\section{The case: $p < q$}
\label{sec:p<q}
\begin{lemma}
    \label{lem:p<q}
    Let $p< q$ and $\lambda > \lambda_1(p)$. 
Then there exists $v_{\lambda} \in \mathcal{W}_{\lambda}$ such that 
$$m=\widetilde{J}_{\lambda}(v_{\lambda})=\inf\limits_{v \in \mathcal{W}_{\lambda}}\widetilde{J}_{\lambda}(v) \in (-\infty, 0)\,.
$$ 
\end{lemma}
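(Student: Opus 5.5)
Since $p<q$, the factor $\frac{1}{q}-\frac{1}{p}$ is negative and $\frac{q}{q-p}>1$. By \eqref{fiber:on:W:lambda}, minimizing $\widetilde{J}_{\lambda}$ on $\mathcal{W}_{\lambda}$ is therefore the same as maximizing $|H_{\lambda}(v)| = \lambda\int_{\Omega^e}K|v|^p - \int_{\Omega^e}|\nabla v|^p$ over $v\in\mathcal{W}_{\lambda}$.

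\textbf{Step 1: $m<0$.} Since $\lambda>\lambda_1(p)$, the function $\varphi_\lambda$ from \eqref{char:lambda:1:p:reverse} lies in $\mathcal{H}_\lambda^-$. The rescaling $\varphi_\lambda/G(\varphi_\lambda)^{1/q}$ stays in $\mathcal{H}_\lambda^-$ because $H_\lambda$ is $p$-homogeneous, and it satisfies $G=1$. Hence $\mathcal{W}_\lambda\neq\emptyset$, and $m\le\widetilde J_\lambda(\cdot)<0$ at this element.

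\textbf{Step 2: $m>-\infty$.} We need a uniform bound $\lambda\int K|v|^p \le C$ for $v$ with $\int|\nabla v|^q=1$. I would use Hölder with $K\in L^\infty\cap L^{N/p}$ (so $K\in L^{s}$ for all $s\in[N/p,\infty]$), together with the Sobolev embedding $W^{1,q}_0(\Omega^e)\hookrightarrow L^{q^*}(\Omega^e)$. The relevant exponent is $s=q^*/(q^*-p)$. Since $p<q<q^*$, we get $s>1$. Moreover $s\ge N/p$ is equivalent to $q^*\le p^*$, which fails because $q>p$. So the exponent is too small to use $K\in L^{N/p}$ this way. I would therefore instead use the Luxemburg/modular structure. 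For $v\in\mathcal{W}_\lambda$ with $\int|\nabla v|^p$ possibly large, we have $|H_\lambda(v)|\le (\lambda/\lambda_1(p)-1)\int|\nabla v|^p$, so it suffices to bound $\int|\nabla v|^p$ on the set where $H_\lambda<0$. This is not automatic: with $\int|\nabla v|^q=1$ fixed, $\int|\nabla v|^p$ can be unbounded, since spreading out increases the $p$-norm when $p<q$.

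The key estimate is therefore this. Interpolate $K|v|^p$ by writing $\int K|v|^p\le \|K\|_{L^{s}}\|v\|_{L^{q^*}}^{p}$ with $s = q^*/(q^*-p)$, which lies in $[N/p,\infty]$ exactly when $q^*\ge p^*$, i.e.\ $q\ge p$ — good. I made a sign error above: in fact $s=q^*/(q^*-p)\ge N/p \iff pq^*\ge N(q^*-p)\iff q^*\le Np/(N-p)=p^*$. Since $q^*>p^*$, this fails. So I will instead split $K|v|^p$ by Hölder with a convex combination of $L^{p^*}$ and $L^{q^*}$. Since $K\in L^{N/p}$ pairs with $p^*$, I would estimate
\[
\lambda\int K|v|^p\le \lambda_1(p)^{-1}\lambda\int|\nabla v|^p,
\]
which still involves the $p$-gradient. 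The genuine way out is to maximize $\lambda\int K|v|^p-\int|\nabla v|^p$. Use $\int K|v|^p\le \|K\|_{L^{N/p}}^{\theta}\dots$ via splitting $K=K\chi_{\{K>\varepsilon\}}+K\chi_{\{K\le\varepsilon\}}$: on the small-$K$ part, apply $\varepsilon\lambda\int|v|^p$ controlled by… This is the main obstacle, and I expect the paper to resolve it via Lemma~\ref{lem:Grecu:lem2}-type estimates from \cite{Grecu2022}. The bound $\lambda\int K|v|^p\le C(\int|\nabla v|^p)^{a}(\int|\nabla v|^q)^{b}$ with $a<1$ would give boundedness of $|H_\lambda|$ by Young's inequality, and that is what I would try to prove first.

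\textbf{Step 3: attainment.} Take a minimizing sequence $v_n\in\mathcal{W}_\lambda$, so $|H_\lambda(v_n)|\to (m/(\frac1q-\frac1p))^{(q-p)/q}>0$. The bound from Step 2 then gives boundedness in $W^{1,p}_0$ and $W^{1,q}_0$, hence in $X$ by \eqref{est:X:norm:bound}. By uniform convexity, $X$ is reflexive, so a subsequence satisfies $v_n\rightharpoonup v$ in $X$. Compactness of $u\mapsto\int K|u|^p$ on $W^{1,p}_0(\Omega^e)$ (from $K\in L^{N/p}$, cutting off at infinity) gives $\int K|v_n|^p\to\int K|v|^p$. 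Weak lower semicontinuity gives $\int|\nabla v|^p\le\liminf$ and $G(v)\le1$. Hence $H_\lambda(v)\le\lim H_\lambda(v_n)<0$, so $v\ne0$ and $v\in\mathcal{H}_\lambda^-$. Finally rescale: $w=v/G(v)^{1/q}\in\mathcal{W}_\lambda$. Then $|H_\lambda(w)|=G(v)^{-p/q}|H_\lambda(v)|\ge|H_\lambda(v)|\ge\lim|H_\lambda(v_n)|$, so $w$ attains $m$, and $v_\lambda:=w$.
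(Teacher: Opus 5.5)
Your Step 2 is where the substance of the lemma lies, and you do not carry it out. You correctly reduce $m>-\infty$ to showing that $\int_{\Omega^e}|\nabla v|^p$ stays bounded on $\mathcal{W}_\lambda$. You then leave this as ``the main obstacle'' and propose to prove $\lambda\int K|v|^p\le C(\int|\nabla v|^p)^{a}(\int|\nabla v|^q)^{b}$ with $a<1$. As an inequality on all of $X$, which is what the Young's-inequality step needs, this is false in general under $K\in L^\infty\cap L^{N/p}$. Testing with $tv$ forces $pa+qb=p$. Now take $K(x)=(e+|x|)^{-p}(\log(e+|x|))^{-\beta}$ with $\beta>p/N$, and $v_R(x)=\phi(x/R)$, where $\phi\in C_c^\infty$ is supported in $\{1<|y|<3\}$ and equals $1$ on $\{3/2\le|y|\le 5/2\}$. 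Then $\int K|v_R|^p\gtrsim R^{N-p}(\log R)^{-\beta}$, while the right-hand side is of order $R^{N-p-(1-a)N(q-p)/q}$, so the ratio blows up as $R\to\infty$. Such a bound would need extra decay, e.g.\ $K\in L^s$ for some $s<N/p$; your H\"older attempt failed exactly at the borderline $s=N/p$. Step 3 rests on the same missing bound, since it needs $\int|\nabla v_n|^p$ bounded for the minimizing sequence, not merely $|H_\lambda(v_n)|$.

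The missing idea, which is how the paper argues, is normalization combined with the compactness you already use in Step 3. Suppose $(v_n)\subset\mathcal{W}_\lambda$ with $\int|\nabla v_n|^p\to\infty$; by \eqref{char:lambda:1:p} this is forced when $H_\lambda(v_n)\to-\infty$. Set $w_n=v_n/\|v_n\|_p$. Then:
\begin{itemize}
\item $\|w_n\|_p=1$;
\item $\int K|w_n|^p>1/\lambda$, because $w_n\in\mathcal{H}_\lambda^-$;
\item $\int|\nabla w_n|^q=\|v_n\|_p^{-q}\to0$, because $G(v_n)=1$.
\end{itemize}
For a subsequence, $w_n\rightharpoonup w_0$ in $W^{1,p}_0(\Omega^e)$, and \cite[Lem.~1.1]{CD2014} gives $\int K|w_0|^p\ge1/\lambda$. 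On the other hand, $w_n\to0$ in $W^{1,q}_0(\Omega^e)$, and comparing both convergences in the sense of distributions gives $w_0=0$, a contradiction. Hence $\mathcal{W}_\lambda$ is bounded in $W^{1,p}_0(\Omega^e)$, which yields both $m>-\infty$ and the bound needed in Step 3.

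A direct estimate, close to what you were attempting, also works. Bound $\int_{\Omega^e\cap B_R}K|v|^p\le C_R\|v\|_q^p$ via $W^{1,q}_0\hookrightarrow L^{q^*}$. Bound $\int_{\Omega^e\setminus B_R}K|v|^p\le\|K\|_{L^{N/p}(\Omega^e\setminus B_R)}S_p^p\|v\|_p^p$ via $W^{1,p}_0\hookrightarrow L^{p^*}$, with $S_p$ the Sobolev constant. For $R$ large, absorb the second term into $\|v\|_p^p<\lambda\int K|v|^p$. Your level-set split of $K$ works the same way if you pair $K\chi_{\{K\le\varepsilon\}}$, whose $L^{N/p}$ norm tends to $0$, with $L^{p^*}$. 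Bounding it by $\varepsilon\int|v|^p$ does not work, since that quantity is not controlled on an exterior domain. With the bound in hand, the rest of your Step 3 is fine, including the rescaling $v/G(v)^{1/q}$ in place of the paper's contradiction argument.
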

\begin{proof}
We note that $m < 0$ due to \eqref{fiber:crit:val}. Thus, we prove the remaining two steps below:\\
\noindent {\em Step~1:} Show $\inf\limits_{\mathcal{W}_{\lambda}}\widetilde{J}_{\lambda}=m > - \infty$.\\
Suppose to the contrary that  there exists  $(v_n) \subset \mathcal{W}_{\lambda}$  such that 
\begin{equation*}
\widetilde{J}_{\lambda}(v_n) = \left(\frac{1}{q}-\frac{1}{p}\right)|H_{\lambda}(v_n)|^{\frac{q}{q-p}}  \to  - \infty\,,    
\end{equation*}
that is,
\begin{equation}
\label{H:vn:-infty}
    H_{\lambda}(v_n)=\int_{\Omega^e}|\nabla v_n|^p - \lambda \int_{\Omega^e}K(x)|v_n|^p \to - \infty\,.
\end{equation}
This in turn implies that 
\begin{equation}
\label{lp:weight:infty}
\int_{\Omega^e}K(x)|v_n|^p \to \infty\,.
\end{equation}
On the other hand, \eqref{char:lambda:1:p} yields 
\begin{equation}
\label{var:char:e.v.}
   \int_{\Omega^e}|\nabla v_n|^p \geq  \lambda_1(p) \int_{\Omega^e}K(x)|v_n|^p \,.
\end{equation}
Combining \eqref{lp:weight:infty} and \eqref{var:char:e.v.}, we have
\begin{equation}
\label{grad:v_n:infty} 
   \int_{\Omega^e}|\nabla v_n|^p  \to \infty\,.
\end{equation}
Let us normalize $v_n$ in $W^{1,p}_0(\Omega^e)$, that is, define
\begin{equation*}
w_n(x):=\frac{v_n(x)}{\left(\int_{\Omega^e} |\nabla v_n|^p\right)^{\frac{1}{p}}}\,.    
\end{equation*}
Then $\|w_n\|_p=1$ and so, up to a subsequence, there exists $w_0 \in W^{1,p}_0(\Omega^e)$ such that $w_n \rightharpoonup w_0$ (weakly) in $W^{1,p}_0(\Omega^e)$. Then by \cite[Lem.~1.1]{CD2014} 
\begin{equation}
\label{compact:cd2014}
  \int_{\Omega^e}K(x)|w_n|^p \to \int_{\Omega^e}K(x)|w_0|^p\,.  
\end{equation}
Using the facts that $\|w_n\|_p=1$ and $w_n \in \mathcal{H}_{\lambda}^-$, since $v_n \in \mathcal{H}_{\lambda}^-$,  we have
\begin{equation}
\label{bound:K(x):w_n}
    \int_{\Omega^e}K(x)|w_n|^p > \frac{1}{\lambda} >0\,,
\end{equation}
for all $n \in \NN$. From \eqref{compact:cd2014} and \eqref{bound:K(x):w_n}, we deduce that 
\begin{equation}
\label{bound:K(x):w_0}
    \int_{\Omega^e}K(x)|w_0|^p \geq \frac{1}{\lambda} >0\,.
\end{equation}

On the other hand, from $v_n \in \mathcal{W}_{\lambda}$ and \eqref{grad:v_n:infty}, 
\begin{equation}
\label{norm:wn:to:0}
    \int_{\Omega^e}|\nabla w_n|^q = \frac{\int_{\Omega^e}|\nabla v_n|^q}{\left(\int_{\Omega^e}|\nabla v_n|^p\right)^{\frac{q}{p}}} = \frac{1}{\left(\int_{\Omega^e}|\nabla v_n|^p\right)^{\frac{q}{p}}} \to 0\,,
\end{equation}
that is $w_n \to 0$ a.e. in $\Omega^e$, a contradiction to \eqref{bound:K(x):w_0}. It follows that 
$(v_n)$ is bounded in  $W^{1,p}_0(\Omega^e)$. But then \eqref{var:char:e.v.} implies that $\displaystyle \int_{\Omega^e}K(x)|v_n|^p$ is bounded as well. Therefore, \eqref{H:vn:-infty} cannot hold, which proves $m=\inf\limits_{\mathcal{W}_{\lambda}}\widetilde{J}_{\lambda} > - \infty$, as desired.

\noindent {\em Step~2:} There exists $v_{\lambda} \in \mathcal{W}_{\lambda}$ such that $\widetilde{J}_{\lambda}(v_{\lambda})=m$.\\
Consider a minimizing sequence $(v_n) \subset \mathcal{W}_{\lambda}$ for $\widetilde{J}_{\lambda}$, 
\begin{equation*}
    \widetilde{J}_{\lambda}(v_n) \to  m > - \infty\,.
\end{equation*}
This is equivalent to writing that  $(v_n) \subset\mathcal{W}_{\lambda}$ is a minimizing sequence for $H_{\lambda}$,
\begin{equation*}
    \lim\limits_{n \to \infty}H_{\lambda}(v_n) = - \left(\frac{pq}{q-p}|m|\right)^{\frac{q-p}{q}} =:m'  \in (-\infty, 0)\,, 
\end{equation*}
that is, 
\begin{equation}
\label{eq:H:vn:searrow}
H_{\lambda}(v_n)= \int_{\Omega^e}|\nabla v_n|^p - \lambda  \int_{\Omega^e}K(x)|v_n|^p   \to m'\,.
\end{equation}
It follows from \eqref{eq:H:vn:searrow} that both integrals in \eqref{eq:H:vn:searrow} are either unbounded or bounded. If $\displaystyle \int_{\Omega^e}|\nabla v_n|^p \to \infty$, 
then by normalizing in $W^{1,p}_0(\Omega^e)$ and repeating the arguments of {\em Step~1}, we can reach a contradiction. Therefore, $(v_n)$ must be bounded in $W^{1,p}_0(\Omega^e)$. Passing  to a subsequence if necessary, there exists $v_{\lambda} \in W^{1,p}_0(\Omega^e)$ such that $v_n \rightharpoonup v_{\lambda}$ (weakly) in $W^{1,p}_0(\Omega^e)$.  Then by \cite[Lem.~1.1]{CD2014} 
\begin{equation}
\label{compact:cd2014:vn}
  \int_{\Omega^e}K(x)|v_n|^p \to \int_{\Omega^e}K(x)|v_{\lambda}|^p\,.  
\end{equation}
By the weak lower semicontinuity of the norm, we have 
\begin{equation}
\label{norm:wlsc}
\liminf\limits_{n \to \infty} \int_{\Omega^e}|\nabla v_n|^p \geq \int_{\Omega^e}|\nabla v_{\lambda}|^p\,.
\end{equation}
 Combining \eqref{eq:H:vn:searrow}, \eqref{compact:cd2014:vn} and \eqref{norm:wlsc}, and passing to the limit as $n \to \infty$, we get
\begin{equation}
\label{H:at:v_c}
 \int_{\Omega^e}|\nabla v_{\lambda}|^p - \lambda \int_{\Omega^e}K(x)|v_{\lambda}|^p \leq m' < 0\,.
\end{equation}
In particular, $v_{\lambda} \neq 0$ and $v_{\lambda} \in \mathcal{H}_{\lambda}^-$.

\par To complete the proof, we show that $v_{\lambda} \in \mathcal{W}_{\lambda}$. Since for any $n$, we have $v_n \in \mathcal{W}_{\lambda}$,  we may assume that $v_n \rightharpoonup v_{\lambda}$ (weakly) in $W^{1,q}_0(\Omega^e)$, passing to a subsequence if necessary, and 
\begin{equation*}
1=\liminf\limits_{n \to \infty} \int_{\Omega^e}|\nabla v_n|^q \geq \int_{\Omega^e}|\nabla v_{\lambda}|^q > 0\,.
\end{equation*}
Assume by contradiction that $\| v_{\lambda}\|_q< 1$. Take $t_{\lambda} > 1$
such that $\|t_{\lambda}v_{\lambda}\|_q=1$. Then 
\begin{equation*}
    H_{\lambda}(t_{v_\lambda}v_{\lambda})=t_{v_\lambda}^pH_{\lambda}(v_{\lambda}) \leq t_{v_\lambda}^pm' < m'
\end{equation*}
a contradiction to \eqref{H:at:v_c}. Therefore $\|v_{\lambda}\|_q=1$, i.e., $v_{\lambda} \in \mathcal{W}_{\lambda}$ and $H_{\lambda}(v_{\lambda})=m'$. Hence 
$\widetilde{J}_{\lambda}(v_{\lambda})=m$,
completing the proof of {\em Step~2}. This establishes the lemma.
\end{proof}

\section{The case: $p> q$}
\label{sec:p>q}
\begin{lemma}
    \label{lem:p>q}
    Let $p> q$ and $\lambda > \lambda_1(p)$. Then there exists $v_{\lambda} \in \mathcal{W}_{\lambda}$ such that 
    $$m=\widetilde{J}_{\lambda}(v_{\lambda})=\inf\limits_{v \in \mathcal{W}_{\lambda}}\widetilde{J}_{\lambda}(v) \in (0, \infty)\,.
    $$
\end{lemma}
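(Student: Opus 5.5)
The plan is to reduce the statement to the same minimization problem for $H_{\lambda}$ on $\mathcal{W}_{\lambda}$ that appeared in Lemma~\ref{lem:p<q}, keeping track of the sign change. For $p>q$ we have $\frac{1}{q}-\frac{1}{p}>0$ and $\frac{q}{q-p}<0$. So by \eqref{fiber:on:W:lambda},
\begin{equation*}
\widetilde{J}_{\lambda}(v)=\left(\frac{1}{q}-\frac{1}{p}\right)\frac{1}{|H_{\lambda}(v)|^{\frac{q}{p-q}}}>0 \qquad \mbox{for } v\in\mathcal{W}_{\lambda}.
\end{equation*}
This is a strictly decreasing function of $|H_{\lambda}(v)|=-H_{\lambda}(v)$. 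Minimizing $\widetilde{J}_{\lambda}$ on $\mathcal{W}_{\lambda}$ is therefore equivalent to minimizing $H_{\lambda}$ on $\mathcal{W}_{\lambda}$. The claim $m\in(0,\infty)$ amounts to two facts: $m':=\inf_{\mathcal{W}_{\lambda}}H_{\lambda}$ lies in $(-\infty,0)$, and it is attained. Here $\mathcal{W}_{\lambda}\neq\emptyset$ because $\varphi_{\lambda}$ from \eqref{char:lambda:1:p:reverse}, rescaled so that $G=1$, belongs to it. Hence $m'<0$, and then $m=\left(\frac1q-\frac1p\right)|m'|^{-\frac{q}{p-q}}$. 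Note that $m>0$ is automatic once $m'>-\infty$, while $m<\infty$ follows from $m'<0$.

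\emph{Step 1} ($m'>-\infty$, i.e.\ $m>0$). I would repeat Step~1 of Lemma~\ref{lem:p<q} essentially verbatim, since that argument never used the order of $p$ and $q$. Suppose $(v_n)\subset\mathcal{W}_{\lambda}$ with $H_{\lambda}(v_n)\to-\infty$.
\begin{itemize}
\item Then $\int K|v_n|^p\to\infty$, and by \eqref{char:lambda:1:p} also $\int|\nabla v_n|^p\to\infty$.
\item Normalize $w_n:=v_n/\|v_n\|_p$ and pass to $w_n\rightharpoonup w_0$ in $W^{1,p}_0(\Omega^e)$.
\item The compactness \cite[Lem.~1.1]{CD2014} together with $w_n\in\mathcal{H}^-_{\lambda}$ gives $\int K|w_0|^p\ge 1/\lambda>0$.
\item On the other hand, $\int|\nabla w_n|^q=\|v_n\|_p^{-q}\to0$, so $w_n\to0$ in $W^{1,q}_0(\Omega^e)$. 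Weak limits in $W^{1,p}_0$ and $W^{1,q}_0$ coincide as distributions, so $w_0=0$, which is a contradiction.
\end{itemize}

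\emph{Step 2} (attainment). Take a minimizing sequence $(v_n)\subset\mathcal{W}_{\lambda}$ with $H_{\lambda}(v_n)\to m'$. As in Lemma~\ref{lem:p<q}, the normalization argument of Step~1 rules out $\|v_n\|_p\to\infty$, so $(v_n)$ is bounded in both $W^{1,p}_0$ and $W^{1,q}_0$. Passing to a subsequence, $v_n\rightharpoonup v_{\lambda}$ in both spaces. By \eqref{est:X:norm:bound} the modular is bounded, so $(v_n)$ is bounded in the uniformly convex (hence reflexive) space $X$. We may therefore also assume $v_n\rightharpoonup v_{\lambda}$ in $X$, which guarantees $v_{\lambda}\in X$.

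The compactness of $u\mapsto\int K|u|^p$ and weak lower semicontinuity give $H_{\lambda}(v_{\lambda})\le m'<0$. In particular $v_{\lambda}\neq0$ and $v_{\lambda}\in\mathcal{H}^-_{\lambda}$. Lower semicontinuity also gives $0<\int|\nabla v_{\lambda}|^q\le1$. If this were $<1$, choose $t>1$ with $G(tv_{\lambda})=1$; then $H_{\lambda}(tv_{\lambda})=t^pH_{\lambda}(v_{\lambda})<m'$, contradicting the definition of $m'$. Hence $v_{\lambda}\in\mathcal{W}_{\lambda}$, $H_{\lambda}(v_{\lambda})=m'$, and $\widetilde{J}_{\lambda}(v_{\lambda})=m$.

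The main obstacle is Step~1, the a priori bound showing $H_{\lambda}$ is bounded below on $\mathcal{W}_{\lambda}$. Here it is what prevents $m=0$, whereas for $p<q$ it prevented $m=-\infty$. The key point is the identification $w_0=0$ via the vanishing $W^{1,q}$ seminorm. A secondary technical point is making sure the limit $v_{\lambda}$ lies in $X$, handled by the modular bound and reflexivity of $X$.
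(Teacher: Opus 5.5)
Your proof is correct and follows the paper's argument. You reduce to extremizing $H_{\lambda}$ on $\mathcal{W}_{\lambda}$, exclude blow-up by a normalization/compactness argument in which the $q$-gradient norm of the normalized sequence vanishes, and then pass to a weak limit and use a rescaling by $t>1$ to force $G(v_{\lambda})=1$. The differences are cosmetic: in Step~1 the paper normalizes by $\left(\int_{\Omega^e}K(x)|v_n|^p\right)^{1/p}$ rather than by $\|v_n\|_p$, and your reflexivity argument in $X$ makes explicit the membership $v_{\lambda}\in X$, which the paper leaves implicit in Step~2.
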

\begin{proof}
We proceed again to prove this assertion in two steps. Note that, since $p>q$, it follows from \eqref{fiber:crit:val} that $0 \leq m < \infty$.\\
\noindent {\em Step~1:} Show $\inf\limits_{\mathcal{W}_{\lambda}}\widetilde{J}_{\lambda}=m > 0$.\\
Assume to the contrary that there exists a sequence $(v_n) \subset \mathcal{W}_{\lambda}$ such that 
\begin{equation}
\label{J:to:0}
    \widetilde{J}_{\lambda}(v_n) \to 0\,,
\end{equation}
that is, 
\begin{equation}
    \label{neg:H:la:infty}
    - H_{\lambda}(v_n) = \lambda \int_{\Omega^e}K(x)|v_n|^p -  \int_{\Omega^e}|\nabla v_n|^p  \to  \infty\,.
\end{equation}
This implies 
\begin{equation}
    \label{K:p:infty}
    \int_{\Omega^e}K(x)|v_n|^p \to  \infty\,.
\end{equation}
Now we normalize $(v_n)$ as follows:
\begin{equation*}
    w_n(x):= \frac{v_n(x)}{(\int_{\Omega^e}K(x)|v_n|^p)^{\frac{1}{p}}}\,.
\end{equation*}
Since $(v_n) \subset \mathcal{W}_{\lambda} \subset \mathcal{H}_{\lambda}^-$,
\begin{equation*}
    \int_{\Omega^e}|\nabla w_n|^p = \frac{\int_{\Omega^e}|\nabla v_n|^p}{\int_{\Omega^e}K(x)|v_n|^p } < \lambda 
\end{equation*}
and 
\begin{equation}
\label{q norm:w:to:0}
    \int_{\Omega^e}|\nabla w_n|^q = \frac{\int_{\Omega^e}|\nabla v_n|^q}{\left(\int_{\Omega^e}K(x)|v_n|^p\right)^{\frac{q}{p}}} = \frac{1}{\left(\int_{\Omega^e}K(x)|v_n|^p\right)^{\frac{q}{p}}} \to 0\,. 
\end{equation}
Then it follows from \eqref{est:X:norm:bound} that $(w_n)$ is bounded in $X$. Up to a subsequence, we can assume that there exists $w_0 \in X$ such that $w_n \rightharpoonup w_0$ (weakly) in $X$. By \eqref{est:Sobolev:norm:bound}, $w_n \rightharpoonup w_0$ (weakly) in $W^{1,p}_0(\Omega^e)$ and $W^{1,q}_0(\Omega^e)$ as well. The weak lower semicontinuity of the norm in $W^{1,q}_0(\Omega^e)$ combined with \eqref{q norm:w:to:0} imply that
\begin{equation*}
    \int_{\Omega^e}|\nabla w_0|^q \leq \liminf\limits_{n \to \infty}\int_{\Omega^e}|\nabla w_n|^q =0\,.
\end{equation*}
In particular, $w_0=0$ a.e. in $\Omega^e$.
\par On the other hand, since also $w_n \rightharpoonup w_0$ (weakly) in $W^{1,p}_0(\Omega^e)$ and $\displaystyle \int_{\Omega^e}K(x)|w_n|^p=1$, by \cite[Lem.~1.1]{CD2014}
\begin{equation*}
    \int_{\Omega^e}K(x)|w_n|^p \to \int_{\Omega^e}K(x)|w_0|^p\,.
\end{equation*}
Therefore $\displaystyle \int_{\Omega^e}K(x)|w_0|^p=1$, a contradiction to the fact that $w_0 =0$ a.e. in $\Omega^e$. Consequently, $\displaystyle \int_{\Omega^e}K(x)|v_n|^p$ is bounded, which in turn implies that $\displaystyle \int_{\Omega^e}|\nabla v_n|^p$ is bounded, since $v_n \in \mathcal{W}_{\lambda} \subset \mathcal{H}_{\lambda}^-$. Therefore \eqref{J:to:0} and \eqref{neg:H:la:infty} cannot occur, which proves that $m>0$.

\medskip

\noindent {\em Step~2:} There exists $v_{\lambda} \in \mathcal{W}_{\lambda}$ such that $\widetilde{J}_{\lambda}(v_{\lambda})=m$.\\
To prove this, we consider a minimizing sequence $(v_n) \subset \mathcal{W}_{\lambda}$ for $\widetilde{J}_{\lambda}$, that is, 
\begin{equation*}
   \widetilde{J}_{\lambda}(v_n) \to m > 0\,.
\end{equation*}
Since $p> q$, this is equivalent to writing that  $(v_n) \subset \mathcal{W}_{\lambda}$ is a maximizing sequence for $-H_{\lambda}$,
\begin{equation*}
    \lim\limits_{n \to \infty}-H_{\lambda}(v_n) =  \left(\frac{pq}{p-q}|m|\right)^{\frac{q-p}{q}} =:m' \in (0, \infty)\,, 
\end{equation*}
that is, 
\begin{equation}
\label{eq:H:vn:nearrow}
 \lambda  \int_{\Omega^e}K(x)|v_n|^p  - \int_{\Omega^e}|\nabla v_n|^p \to m'\,.
\end{equation}
It follows from \eqref{eq:H:vn:nearrow} that both integrals in \eqref{eq:H:vn:nearrow} are either unbounded or bounded. However, if $\displaystyle \int_{\Omega^e}|\nabla v_n|^p$ is unbounded, we can normalize as in {\em Step~1} above to get a contradiction. Therefore, $(v_n)$ is a bounded sequence in $W^{1,p}_0(\Omega^e).$ Passing to a subsequence if necessary, there exists $v_{\lambda} \in W^{1,p}_0(\Omega^e)$ such that $v_n \rightharpoonup v_{\lambda}$ (weakly) in $ W^{1,p}_0(\Omega^e)$. As before, using \cite[Lem.~1.1]{CD2014} and \eqref{norm:wlsc}, \eqref{eq:H:vn:nearrow}, we get
\begin{equation}
   \label{eq:H:vc:nearrow}
 \lambda  \int_{\Omega^e}K(x)|v_{\lambda}|^p  - \int_{\Omega^e}|\nabla v_{\lambda}|^p \geq  m' >0\,. 
\end{equation}
In particular, $v_{\lambda} \neq 0$ and $v_{\lambda} \in \mathcal{H}_{\lambda}^-$.
\par From the constraint $v_n \in \mathcal{W}_{\lambda}$, we may assume that $v_n \rightharpoonup v_{\lambda}$ (weakly) in $ W^{1,q}_0(\Omega^e)$, and
\begin{equation*}
1=\liminf\limits_{n \to \infty} \int_{\Omega^e}|\nabla v_n|^q \geq \int_{\Omega^e}|\nabla v_{\lambda}|^q > 0\,.
\end{equation*}
If $\| v_{\lambda}\|_q< 1$, we take $t_{v_\lambda} > 1$
such that $\|t_{v_\lambda}v_{\lambda}\|_q=1$. Then by \eqref{eq:H:vc:nearrow}, we get
\begin{equation*}
   - H_{\lambda}(t_{v_\lambda}v_{\lambda})=-t_{v_\lambda}^pH_{\lambda}(v_{\lambda}) = t_{v_\lambda}^pm' > m'\,,
\end{equation*}
a contradiction to \eqref{eq:H:vn:nearrow}. Therefore,  $\|v_{\lambda}\|_q=1$, i.e., $v_{\lambda} \in \mathcal{W}_{\lambda}$ and $H_{\lambda}(v_{\lambda})=m'$. Hence 
$\widetilde{J}_{\lambda}(v_{\lambda})=m$,
completing the proof of {\em Step~2}. This establishes the lemma.
\end{proof}
Note that $\mathcal{W}_{\lambda}$ is a relatively open set in $\{v \in X: G(v)=1\}$. Therefore, for each $\lambda > \lambda_1(p)$, the minimizer $v_{\lambda}$ from Lemma \ref{lem:p<q} and Lemma~\ref{lem:p>q} are, in fact, conditional critical points of $\widetilde{J}_{\lambda}$ subject to the constraint $\mathcal{W}_{\lambda}$. Then it follows from Lemma~\ref{lem:c.p:fiber} that 
\begin{equation}
\label{u:lambda:cp}
u_{\lambda}=t_{v_{\lambda}}v_{\lambda} \mbox{ is a critical point of } J_{\lambda},     
\end{equation}
where $t_{v_{\lambda}}$ is given by \eqref{def:t(v)}. Hence, $u_{\lambda}$ is a weak solution of \eqref{pde}.

\medskip

%%%%%%%%%%%%%%%%%%%%%%%%%%%%%%%%%%%%%%%%%%%%%%%%

\section{Asymptotics behavior of solutions}
\label{sec:asympt}
Here, we investigate the behavior of the weak solution $u_{\lambda}$, given by \eqref{u:lambda:cp}, as $\lambda \to \lambda_1(p)^+$ and as $\lambda \to \infty$. First, we establish some lemmas below which aid in the investigation of behaviors as $\lambda \to \lambda_1(p)^+$. 
\begin{lemma}
    \label{lem:qual:1}
    Let $\Lambda \in \RR$ and  $\Lambda > \lambda_1(p)$. Then there exists $c_{\Lambda} >0$ such that for all $\lambda \in (\lambda_1(p), \Lambda)$, 
\begin{equation}
    \label{est:weight:lp:vc:lambda}
    \int_{\Omega^e}K(x)|v_{\lambda}|^p \leq c_{\Lambda}\,.
\end{equation}    
\end{lemma}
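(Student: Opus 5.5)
The plan is to argue by contradiction, by the same normalization device used in Step~1 of Lemmas~\ref{lem:p<q} and~\ref{lem:p>q}. The only facts about $v_\lambda$ I will use are $v_\lambda\in\mathcal{W}_\lambda$, i.e. $G(v_\lambda)=1$ and $H_\lambda(v_\lambda)<0$. Minimality of $v_\lambda$ plays no role, so the argument covers the cases $p<q$ and $p>q$ at once.

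Suppose the claim fails. Then there are $\lambda_n\in(\lambda_1(p),\Lambda)$ with
$$a_n:=\int_{\Omega^e}K(x)|v_{\lambda_n}|^p\to\infty.$$
Since $v_{\lambda_n}\in\mathcal{H}^-_{\lambda_n}$, we have
$$\int_{\Omega^e}|\nabla v_{\lambda_n}|^p<\lambda_n a_n<\Lambda a_n.$$
Set $w_n:=a_n^{-1/p}v_{\lambda_n}$. This normalization gives three properties:
\begin{itemize}
\item $\int_{\Omega^e}K(x)|w_n|^p=1$;
\item $\int_{\Omega^e}|\nabla w_n|^p<\Lambda$;
\item $\int_{\Omega^e}|\nabla w_n|^q=a_n^{-q/p}G(v_{\lambda_n})=a_n^{-q/p}\to 0$.
\end{itemize}
The crucial point is that the bound $\Lambda$ is uniform in $n$, because $\lambda_n<\Lambda$.

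Thus $(w_n)$ is bounded in both $W^{1,p}_0(\Omega^e)$ and $W^{1,q}_0(\Omega^e)$. Passing to a subsequence, $w_n\rightharpoonup w_0$ in $W^{1,p}_0(\Omega^e)$ and $w_n\rightharpoonup \tilde w_0$ in $W^{1,q}_0(\Omega^e)$. The two limits coincide, since both spaces embed into $L^1_{\rm loc}$ (via the Sobolev embeddings into $L^{p^*}$ and $L^{q^*}$) and weak limits agree in the sense of distributions.

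Weak lower semicontinuity of the $W^{1,q}_0$ norm gives $\int_{\Omega^e}|\nabla w_0|^q\le\liminf_n\int_{\Omega^e}|\nabla w_n|^q=0$, so $w_0=0$ a.e. On the other hand, \cite[Lem.~1.1]{CD2014} (compactness of $u\mapsto\int_{\Omega^e}K|u|^p$ on $W^{1,p}_0(\Omega^e)$, which uses $K\in L^\infty\cap L^{N/p}$) gives
$$\int_{\Omega^e}K(x)|w_0|^p=\lim_n\int_{\Omega^e}K(x)|w_n|^p=1.$$
These two conclusions contradict each other. Hence $\int_{\Omega^e}K(x)|v_\lambda|^p$ is bounded uniformly for $\lambda\in(\lambda_1(p),\Lambda)$, which gives $c_\Lambda$.

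The only delicate point is securing a $W^{1,p}_0$ bound on $w_n$ that does not depend on $\lambda_n$. This is exactly where the restriction $\lambda<\Lambda$ enters, through $H_{\lambda_n}(v_{\lambda_n})<0$. Without an upper bound on $\lambda$ the normalized sequence could fail to be bounded in $W^{1,p}_0(\Omega^e)$, and the compactness lemma could not be applied.
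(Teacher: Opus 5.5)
Your proof is correct and is essentially the paper's argument: contradiction, normalization, the weighted compactness lemma, and the vanishing $W^{1,q}_0$-norm of the normalized sequence. The only difference is that you normalize by $\left(\int_{\Omega^e}K(x)|v_{\lambda_n}|^p\right)^{1/p}$, as in Step~1 of Lemma~\ref{lem:p>q}, so $\Lambda$ enters as the uniform bound $\int_{\Omega^e}|\nabla w_n|^p<\Lambda$; the paper instead normalizes by $\|v_{\lambda_n}\|_p$, as in Step~1 of Lemma~\ref{lem:p<q}, so $\Lambda$ enters through $\int_{\Omega^e}K(x)|w_n|^p>1/\Lambda$, and it first needs \eqref{char:lambda:1:p} to get $\|v_{\lambda_n}\|_p\to\infty$, a step your version avoids.
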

\begin{proof}
    We proceed as in the proof of Lemma~\ref{lem:p<q}, {\em Step~1}. Assume by contradiction that there exists a sequence $(\lambda_n) \subset (\lambda_1(p), \Lambda)$ such that 
    \begin{equation}
    \label{contra:est:weight:lp:vc:lambda}
    \int_{\Omega^e}K(x)|v_{\lambda_n}|^p \to \infty\,.
\end{equation}  
Then \eqref{char:lambda:1:p} and \eqref{contra:est:weight:lp:vc:lambda} together imply 
\begin{equation*}
    \int_{\Omega^e}|\nabla v_{\lambda_n}|^p \to \infty\,.
    \end{equation*}
Now consider a normalized sequence 
\begin{equation*}
    w_n(x):=\frac{v_{\lambda_n}(x)}{\left(\int_{\Omega^e}|\nabla v_{\lambda_n}|^p\right)^{\frac{1}{p}}}\,.
\end{equation*}
Following the argument from the proof of  Lemma~\ref{lem:p<q}, {\em Step~1}, we arrive at the contradiction, proving \eqref{est:weight:lp:vc:lambda}.
\end{proof}
\begin{lemma}
    \label{lem:est:H_c:near:lambda_1}
We have
\begin{equation}
\label{est:H_c:near:lambda_1}
    H_{\lambda}(v_{\lambda}) \to 0^- \quad \mbox{ as } \lambda \to \lambda_1(p)^+\,.
\end{equation}
\end{lemma}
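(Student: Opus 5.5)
Since $v_\lambda\in\mathcal{W}_\lambda\subset\mathcal{H}_\lambda^-$, we already know $H_\lambda(v_\lambda)<0$. It therefore suffices to show $\liminf_{\lambda\to\lambda_1(p)^+}H_\lambda(v_\lambda)\ge 0$. We argue directly from the variational inequality \eqref{char:lambda:1:p} combined with the uniform bound of Lemma~\ref{lem:qual:1}; no further compactness is needed. Fix any $\Lambda>\lambda_1(p)$ and let $\lambda\in(\lambda_1(p),\Lambda)$. Since $v_\lambda\in X$, \eqref{char:lambda:1:p} gives $\int_{\Omega^e}|\nabla v_\lambda|^p\ge\lambda_1(p)\int_{\Omega^e}K(x)|v_\lambda|^p$. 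Hence
\begin{equation*}
0> H_{\lambda}(v_\lambda)=\int_{\Omega^e}|\nabla v_\lambda|^p-\lambda\int_{\Omega^e}K(x)|v_\lambda|^p\ \ge\ -(\lambda-\lambda_1(p))\int_{\Omega^e}K(x)|v_\lambda|^p\ \ge\ -(\lambda-\lambda_1(p))\,c_\Lambda ,
\end{equation*}
where the last step is \eqref{est:weight:lp:vc:lambda}. Letting $\lambda\to\lambda_1(p)^+$ squeezes $H_\lambda(v_\lambda)$ to $0$ from below, which is \eqref{est:H_c:near:lambda_1}.

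The only point needing care is the uniform bound $c_\Lambda$, which is exactly Lemma~\ref{lem:qual:1}. That lemma's proof reuses the normalization argument of Lemma~\ref{lem:p<q}, Step~1, and we must check that this argument applies in both cases $p<q$ and $p>q$. In both cases it relies only on $v_\lambda\in\mathcal{W}_\lambda$, that is, $G(v_\lambda)=1$ and $H_\lambda(v_\lambda)<0$. If the normalized $w_n$ has $\int|\nabla w_n|^p=1$ and $\int|\nabla w_n|^q\to0$, compactness from \cite[Lem.~1.1]{CD2014} still yields $\int K|w_0|^p\ge 1/\Lambda>0$, contradicting $w_0=0$.

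If that reuse were in doubt for $p>q$, we would instead normalize by $\bigl(\int K|v_\lambda|^p\bigr)^{1/p}$, as in Lemma~\ref{lem:p>q}, Step~1, using $\int|\nabla w_n|^p<\Lambda$. This gives the same contradiction, so the bound and the conclusion hold in both cases.
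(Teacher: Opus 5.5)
Your argument is correct and is essentially the paper's proof. The paper likewise uses \eqref{char:lambda:1:p} to obtain $0>H_\lambda(v_\lambda)\ge(\lambda_1(p)-\lambda)\int_{\Omega^e}K(x)|v_\lambda|^p$ and combines it with the uniform bound of Lemma~\ref{lem:qual:1} to squeeze $H_\lambda(v_\lambda)$ to $0^-$. Your extra check is also accurate: the normalization argument behind Lemma~\ref{lem:qual:1} uses only $G(v_\lambda)=1$ and $H_\lambda(v_\lambda)<0$, so it covers both $p<q$ and $p>q$, a point the paper leaves implicit.
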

\begin{proof}
    Using \eqref{char:lambda:1:p}, we get
\begin{equation}
    \label{H_c:above:below}
    0 > H_{\lambda}(v_{\lambda})= \int_{\Omega^e}|\nabla v_{\lambda}|^p - \lambda \int_{\Omega^e}K(x)|v_{\lambda}|^p \geq (\lambda_1(p) - \lambda)\int_{\Omega^e}K(x)|v_{\lambda}|^p\,.
\end{equation}  
It then follows from \eqref{H_c:above:below} and Lemma~\ref{lem:qual:1} that \eqref{est:H_c:near:lambda_1} holds.
\end{proof}

\medskip

%%%%%%%%%%%%%%%%%%%%%%%%%%%%%%%%
\paragraph*{\bf Asymptotic behavior of $u_{\lambda}$:}
Now we proceed to establish the asymptotic behaviors of $u_{\lambda}$ as $\lambda \to  \infty$ and as $\lambda \to \lambda_1(p)^+$.
Clearly,
\begin{equation}
    \label{q-norm:u}    \|u_{\lambda}\|_q=\|t_{v_{\lambda}}v_{\lambda}\|_q =|t_{v_{\lambda}}| \qquad \mbox{ for}\quad v_{\lambda} \in \mathcal{W}_{\lambda}\,.
\end{equation}
On the other hand, since $v_{\lambda} \in \mathcal{W}_{\lambda}$, it follows directly from \eqref{def:t(v)}  and \eqref{fiber:on:W:lambda}  that
\begin{equation}
\label{eq:t:v_c:lambda}
   |t_{v_{\lambda}}| = \left|\frac{pq}{q-p}\right|^{\frac{1}{q}}|\widetilde{J}_{\lambda}(v_{\lambda})|^{\frac{1}{q}}=|H_{\lambda}(v_{\lambda})|^{\frac{1}{q-p}}\,,
\end{equation}
where 
\begin{equation*}
    \widetilde{J}_{\lambda}({v_{\lambda}}) = \min\limits_{v \in \mathcal{W}_{\lambda}}\widetilde{J}_{\lambda}(v)\,.
\end{equation*}
Let $\lambda > \lambda_1(p)$. It follows from \eqref{char:lambda:1:p} and Lemma~\ref{lem:Grecu:lem2} that there exists $\tilde{\varphi} \in X$ such that 
\begin{equation}
    \label{char:lambda:tilda}
 \int_{\Omega^e} |\nabla \tilde\varphi|^p=\tilde\lambda\int_{\Omega^e}K(x)|\tilde\varphi|^p   
\end{equation}
for some $\tilde\lambda \in (\lambda_1(p), \lambda)$. Without loss of generality, we may normalize $\tilde\varphi >0$ in $\Omega^e$ as $\displaystyle \int_{\Omega^e}|\nabla \tilde\varphi|^q =1$, that is, $\tilde\varphi \in \mathcal{W}_{\lambda}$. Then, we get 
\begin{align}
\label{tilde:J:v_c:lambda}
   J_{\lambda}(u_{\lambda})= \widetilde{J}_{\lambda}(v_{\lambda}) 
   &\leq \widetilde{J}_{\lambda}(\tilde\varphi) \nonumber\\&= \frac{p-q}{pq}|H_{\lambda}(\tilde\varphi)|^{\frac{q}{q-p}}
   \nonumber\\& = \frac{p-q}{pq} \left|(\tilde\lambda- \lambda) \int_{\Omega^e}K(x)|\tilde\varphi|^p\right|^{\frac{q}{q-p}}\,.
\end{align}

\par Using \eqref{tilde:J:v_c:lambda}, we deduce that for $\lambda \to \infty$, we have
\begin{enumerate}[label=(\Alph*1)]
    \item if $p < q$ then $J_{\lambda}(u_{\lambda}) \to - \infty$, and
    \item if $p > q$ then $J_{\lambda}(u_{\lambda}) \to 0$.
\end{enumerate}
It then follows from \eqref{q-norm:u}, \eqref{eq:t:v_c:lambda}, and (A1), (B1) above that as $\lambda \to \infty$, we have

\begin{enumerate}[label=(\alph*1)]
    \item if $p < q$ then $\|u_{\lambda}\|_q \to  \infty$, and
    \item if $p > q$ then  $\|u_{\lambda}\|_q  \to 0$.
\end{enumerate}

From \eqref{fiber:on:W:lambda} and Lemma~\ref{lem:est:H_c:near:lambda_1}, we deduce that we have the following asymptotics as  $\lambda  \to \lambda_1(p)^+$:  
\begin{enumerate}[label=(\Alph*2)]
    \item if $p < q$ then $J_{\lambda}(u_{\lambda}) \to 0$, and
    \item if $p > q$ then $J_{\lambda}(u_{\lambda}) \to \infty$.
\end{enumerate}

It then follows from \eqref{q-norm:u},  \eqref{eq:t:v_c:lambda}, and (A2), (B2) above that as $\lambda  \to \lambda_1(p)^+$, we have
\begin{enumerate}[label=(\alph*2)]
    \item if $p < q$ then $\|u_{\lambda}\|_{q} \to 0$, and
    \item  if $p > q$ then$\|u_{\lambda}\|_{q} \to  \infty$.
\end{enumerate}
\bigskip

%%%%%%%%%%%%%%%%%%%%%%%%%%%%%%
\section{Regularity of solutions}
\label{sec:qual:prop}
 Let us recall that proofs of \cite[Lem.~7.5 \& Lem.~7.6]{Gilbarg-Trud} are of local nature. Thus the assertions in these lemmas hold for both bounded and unbounded domains. Adopting these proofs to our situation, we get that for any $\phi \in X$, we have that $|\phi| \in X$ and  $|\nabla \phi| = |\nabla |\phi||$ a.e. in $\Omega^e$.  Moreover, for any $\phi \in X$, its truncation 
\begin{equation}
\label{eq:truncation}
\phi_M(x)= \min\{\phi(x), M\} \quad \mbox{belongs to $X$ for any} \quad M \in \RR\,.
\end{equation}

\par Thus, we can indeed assume that the minimizers found in Lemma~\ref{lem:p<q} and Lemma~\ref{lem:p>q} are nonnegative, that is, 
\begin{equation*}
    u_{\lambda}(x)=t_{v_{\lambda}}v_{\lambda}(x) \geq 0 \mbox{ a.e. in } \Omega^e\,,
\end{equation*}
and they are weak solutions of \eqref{pde}.

\par Thanks to \eqref{eq:truncation} with $M>0$, we can choose the test function in Definition~\ref{def:soln:weak} 
as in the proof of \cite[Lem.~4.5]{Dra-Kuf-Nico}. In particular, we note that the integrals involving both $p$-Laplacian and $q$-Laplacian with the test function in \cite[Lem.~4.5]{Dra-Kuf-Nico} yield nonnegtaive terms, thus in our case, we arrive at the inequality ``$\leq$" instead of the equality in \cite[Eq.~(4.47)]{Dra-Kuf-Nico}. 
 Then we can perform the Moser-type bootstrap argument exactly as in \cite[p.~176]{Dra-Kuf-Nico}, to conclude that
$u_{\lambda} \in L^{\infty}(\Omega^e)$.  

\par Consequently, we can apply the Harnack inequality \cite[Thm.~1.1]{Trud-67} (see also \cite[Thm.~1.9 \& Paragraph after Rem.~1.3]{Dra-Kuf-Nico}) to conclude that $u_{\lambda} >0$ in $\Omega^e$. Furthermore, if $\partial \Omega$ is of class $C^2$, then the regularity result up to the boundary \cite[Thm.~1]{Lieb} and \cite[p.~320]{Lieb-91} and the strong maximum principle (cf. \cite[Thm.~5.4.1]{Puc-Ser_book-07}) guarantee that $u_{\lambda} \in C_0^1(\overline{\Omega^e})$ and $\frac{\partial u_{\lambda}}{\partial \eta} <0$ on $\partial \Omega$ (see also \cite[Thm.~2.1]{Pucci-Radu-2018}).
\par Finally, due to the sign in front of the $q$-Laplacian, we can choose the test function in Definition~\ref{def:soln:weak} as in \cite[p.~257]{Serrin-64} and derive the inequality \cite[eq.~(14)]{Serrin-64}. Following Serrin's proof (\cite{Serrin-64}), we derive the inequality of \cite[p.~259]{Serrin-64}. Then the uniform decay to zero of $u_{\lambda}=u_{\lambda}(x)$ as $|x| \to \infty$ follows.
%%%%%%%%%%%%%%%%%
\section{Final Remarks}
\label{sec:final}
It is well known that the set 
\begin{equation*}
    \mathcal{N}_{\lambda}:=\left\{u \in X \setminus \{0\}: \left<J_{\lambda}'(u), u\right>=0 \right\}
\end{equation*}
is referred to as the \emph{Nehari manifold}. This manifold has been used in the study of  $(p, q)$--Laplacian eigenvalue problems in \cite{ Bob-Tana_Calc-var, Bob-Tan-2018, Bob-Tan-2022} in bounded domain setting and in \cite{Grecu2022, Mihai-Stan-2016} in unbounded domain cases. It follows from Section~\ref{sec:fiber} that if $v \in \mathcal{W}_{\lambda}$, and $t_v \neq 0$ satisfies \eqref{derivative:fiber:1}, then $u = t_v v \in \mathcal{N}_{\lambda}$. On the other hand, taking $u \in \mathcal{N}_{\lambda}$ and normalizing $v: = \frac{u}{\|u\|_q}$, we get $v \in \mathcal{W}_{\lambda}$. Therefore, there is one-to-one correspondence between $\mathcal{W}_{\lambda}$ and $\mathcal{N}_{\lambda}$. Actually, the fibered functional used by the authors in \cite{Bob-Tana_Calc-var}-\cite{Bob-Tan-2022} is similar to the one given by \eqref{fiber:crit:val}.
 Besides the functional setting
in our exterior domain case, the difference consists also of the fact
that they  minimize on the Nehari manifold $ \mathcal{N}_{\lambda}$ rather than on  $\mathcal{W}_{\lambda}$.

\par Analogous to Lemma~\ref{lem:c.p:fiber} and Lemma~\ref{lem:cond:c.p.}, one can prove that a critical point of the conditional variational problem of $J_{\lambda}$ in  $ \mathcal{N}_{\lambda}$ generates a critical point of $J_{\lambda}$ with respect to the entire space $X$ (cf. \cite[Lem.~2]{Bob-Tana_Calc-var}). Therefore, the ``Nehari manifold approach" is in fact a ``fibering method" with the special constraint given by $\mathcal{N}_{\lambda}$. 
Since the manifold  $\mathcal{N}_{\lambda}$ is defined implicitly through the energy functional, its global geometric structure is generally complicated and difficult to characterize.
\par In contrast, the advantage of introducing the manifold  $\mathcal{W}_{\lambda}$ in our work instead of  working with $\mathcal{N}_{\lambda}$ is that it allows us to  ``freeze'' the $W_0^{1,q}(\Omega^e)$ norm, when considering the constraint problem. This simplifies the energy estimates when searching for a minimizer of $\widetilde{J}_{\lambda}$ on  $\mathcal{W}_{\lambda}$ instead of  on $\mathcal{N}_{\lambda}$. Moreover, this approach yields explicit information on the dependence of both the energy and the norm of solutions in $W^{1,q}_0(\Omega^e)$ as $\lambda \to \lambda_1(p)^+$ and as $\lambda \to  \infty$.
\par More importantly, applying the fibering method offers geometric insight into the local nature of the critical points $u_{\lambda}$. Indeed, let $p < q$ and $u_{\lambda}= t_{v_{\lambda}}v_{\lambda}$ be a critical point of $J_{\lambda}$. Then $v_{\lambda} \in \mathcal{H}_{\lambda}^-$ and it follows from \eqref{fibrer} that the function 
\begin{equation}
    \label{map:t:to:J}
    t \mapsto J_{\lambda}(t\,v_{\lambda})
\end{equation}
has a local minimum at $t=t_{v_{\lambda}}$. Since $v_{\lambda}$ is also a local minimizer of the fibered functional $\widetilde{J}_{\lambda}$ with respect to $\mathcal{W}_{\lambda}$, $J_{\lambda}$ has a local minimum at $u_{\lambda}$ with respect to the  directions given by $\mathcal{N}_{\lambda}$, which are transversal to the direction given by $v_{\lambda}$. Therefore, $u_{\lambda}$ is also local minimum of $J_{\lambda}$ with respect to the entire space $X$. On the other hand, let $p>q$. Then the function \eqref{map:t:to:J} has a local maximum at $t=t_{v_{\lambda}}$. But $v_{\lambda}$ is a local minimum of $\widetilde{J}_{\lambda}$ with respect to $\mathcal{W}_{\lambda}$, and as above, $u_{\lambda}$ is a local minimum of $J_{\lambda}$ with respect to the direction given by $\mathcal{N}_{\lambda}$, which are transversal to the direction given by $v_{\lambda}$. Therefore, $J_{\lambda}$ has a saddle point character  at $u_{\lambda}$ with respect to $X$, of ``Morse index'' $1$.

\medskip
\subsection*{Acknowledgments} 
M. Chhetri was supported by a grant from the Simons Foundation 965180. 
The authors thank Prof. Vladimir Bobkov for valuable discussions on this study.

\bibliographystyle{plain}
\bibliography{references-v2}

@article {CD2014,
    AUTHOR = {Chhetri, Maya and Dr\'{a}bek, Pavel},
     TITLE = {Principal eigenvalue of {$p$}-{L}aplacian operator in exterior
              domain},
   JOURNAL = {Results Math.},
  FJOURNAL = {Results in Mathematics},
    VOLUME = {66},
      YEAR = {2014},
    NUMBER = {3-4},
     PAGES = {461--468},
      ISSN = {1422-6383},
   MRCLASS = {35J92 (35B40 35J62 35P30)},
  MRNUMBER = {3272638},
MRREVIEWER = {Leonardo Marazzi},
       DOI = {10.1007/s00025-014-0386-2},
       URL = {https://doi.org/10.1007/s00025-014-0386-2},
}

@article {Grecu2022,
    AUTHOR = {Grecu, Andrei},
     TITLE = {A perturbed eigenvalue problem in exterior domain},
   JOURNAL = {Math. Slovaca},
  FJOURNAL = {Mathematica Slovaca},
    VOLUME = {72},
      YEAR = {2022},
    NUMBER = {4},
     PAGES = {945--958},
      ISSN = {0139-9918},
   MRCLASS = {35J60 (35P99 46E35)},
  MRNUMBER = {4464447},
       DOI = {10.1515/ms-2022-0065},
       URL = {https://doi.org/10.1515/ms-2022-0065},
}

@article {KZ2022,
    AUTHOR = {Kami\'{n}ska, Anna and \.{Z}yluk, Mariusz},
     TITLE = {Uniform convexity, reflexivity, superreflexivity and {$B$}
              convexity of generalized {S}obolev spaces {$W^{1,\Phi}$}},
   JOURNAL = {J. Math. Anal. Appl.},
  FJOURNAL = {Journal of Mathematical Analysis and Applications},
    VOLUME = {509},
      YEAR = {2022},
    NUMBER = {1},
     PAGES = {Paper No. 125925, 31},
      ISSN = {0022-247X},
   MRCLASS = {46E35 (46E30)},
  MRNUMBER = {4358146},
       DOI = {10.1016/j.jmaa.2021.125925},
       URL = {https://doi.org/10.1016/j.jmaa.2021.125925},
}

@article {Mihai-Stan-2016,
    AUTHOR = {Mih\u{a}ilescu, Mihai and Stancu-Dumitru, Denisa},
     TITLE = {A perturbed eigenvalue problem on general domains},
   JOURNAL = {Ann. Funct. Anal.},
  FJOURNAL = {Annals of Functional Analysis},
    VOLUME = {7},
      YEAR = {2016},
    NUMBER = {4},
     PAGES = {529--542},
      ISSN = {2639-7390},
   MRCLASS = {35J92 (35J20 35J60 35P30 46E30 49R05)},
  MRNUMBER = {3543145},
       DOI = {10.1215/20088752-3660738},
       URL = {https://doi.org/10.1215/20088752-3660738},
}

@article {Bob-Tan-2022,
    AUTHOR = {Bobkov, Vladimir and Tanaka, Mieko},
     TITLE = {Multiplicity of positive solutions for {$(p,q)$}-{L}aplace
              equations with two parameters},
   JOURNAL = {Commun. Contemp. Math.},
  FJOURNAL = {Communications in Contemporary Mathematics},
    VOLUME = {24},
      YEAR = {2022},
    NUMBER = {3},
     PAGES = {Paper No. 2150008, 25},
      ISSN = {0219-1997},
   MRCLASS = {35P30 (35B09 35B32 35B34 35J20 35J62)},
  MRNUMBER = {4400188},
MRREVIEWER = {Daniel Maroncelli},
       DOI = {10.1142/S0219199721500085},
       URL = {https://doi.org/10.1142/S0219199721500085},
}

@article {Bob-Tan-2018,
    AUTHOR = {Bobkov, Vladimir and Tanaka, Mieko},
     TITLE = {Remarks on minimizers for {$(p,q)$}-{L}aplace equations with
              two parameters},
   JOURNAL = {Commun. Pure Appl. Anal.},
  FJOURNAL = {Communications on Pure and Applied Analysis},
    VOLUME = {17},
      YEAR = {2018},
    NUMBER = {3},
     PAGES = {1219--1253},
      ISSN = {1534-0392,1553-5258},
   MRCLASS = {35J92 (35A01 35J20)},
  MRNUMBER = {3809121},
       DOI = {10.3934/cpaa.2018059},
       URL = {https://doi.org/10.3934/cpaa.2018059},
}

@article {Bob-Tana_Calc-var,
    AUTHOR = {Bobkov, Vladimir and Tanaka, Mieko},
     TITLE = {On positive solutions for {$(p,q)$}-{L}aplace equations with
              two parameters},
   JOURNAL = {Calc. Var. Partial Differential Equations},
  FJOURNAL = {Calculus of Variations and Partial Differential Equations},
    VOLUME = {54},
      YEAR = {2015},
    NUMBER = {3},
     PAGES = {3277--3301},
      ISSN = {0944-2669,1432-0835},
   MRCLASS = {35J92 (35A01 35B09 35J20 35J62 35P30)},
  MRNUMBER = {3412411},
MRREVIEWER = {Vicen\c{t}iu\ D.\ R\u{a}dulescu},
       DOI = {10.1007/s00526-015-0903-5},
       URL = {https://doi.org/10.1007/s00526-015-0903-5},
}

@article {Serrin-64,
    AUTHOR = {Serrin, James},
     TITLE = {Local behavior of solutions of quasi-linear equations},
   JOURNAL = {Acta Math.},
  FJOURNAL = {Acta Mathematica},
    VOLUME = {111},
      YEAR = {1964},
     PAGES = {247--302},
      ISSN = {0001-5962,1871-2509},
   MRCLASS = {35.47},
  MRNUMBER = {170096},
MRREVIEWER = {C.\ B.\ Morrey, Jr.},
       DOI = {10.1007/BF02391014},
       URL = {https://doi.org/10.1007/BF02391014},
}

@article {pohozaev-99,
    AUTHOR = {Poho\v{z}aev, S. I.},
     TITLE = {The fibering method and its applications to nonlinear boundary
              value problem},
   JOURNAL = {Rend. Istit. Mat. Univ. Trieste},
  FJOURNAL = {Rendiconti dell'Istituto di Matematica dell'Universit\`a di
              Trieste. An International Journal of Mathematics},
    VOLUME = {31},
      YEAR = {1999},
    NUMBER = {1-2},
     PAGES = {235--305},
      ISSN = {0049-4704,2464-8728},
   MRCLASS = {58E05 (35J20 35J65 35J70 47J20)},
  MRNUMBER = {1763253},
MRREVIEWER = {Lorenzo\ Pisani},
}

@incollection {pohozaev-97,
    AUTHOR = {Pohozaev, S. I.},
     TITLE = {The fibering method in nonlinear variational problems},
 BOOKTITLE = {Topological and variational methods for nonlinear boundary
              value problems ({C}hol\'{\i}n, 1995)},
    SERIES = {Pitman Res. Notes Math. Ser.},
    VOLUME = {365},
     PAGES = {35--88},
 PUBLISHER = {Longman, Harlow},
      YEAR = {1997},
      ISBN = {0-582-30921-2},
   MRCLASS = {58E05 (35J65 47H15)},
  MRNUMBER = {1478747},
MRREVIEWER = {Jes\'{u}s\ Hern\'{a}ndez},
}

@book {Puc-Ser_book-07,
    AUTHOR = {Pucci, Patrizia and Serrin, James},
     TITLE = {The maximum principle},
    SERIES = {Progress in Nonlinear Differential Equations and their
              Applications},
    VOLUME = {73},
 PUBLISHER = {Birkh\"{a}user Verlag, Basel},
      YEAR = {2007},
     PAGES = {x+235},
      ISBN = {978-3-7643-8144-8},
   MRCLASS = {35-02 (34B15 35B50)},
  MRNUMBER = {2356201},
MRREVIEWER = {Rodney\ Josu\'{e}\ Biezuner},
}

@book {Gilbarg-Trud,
    AUTHOR = {Gilbarg, David and Trudinger, Neil S.},
     TITLE = {Elliptic partial differential equations of second order},
    SERIES = {Classics in Mathematics},
   EDITION = {1998},
 PUBLISHER = {Springer-Verlag, Berlin},
      YEAR = {2001},
     PAGES = {xiv+517},
      ISBN = {3-540-41160-7},
   MRCLASS = {35-02 (35Jxx)},
  MRNUMBER = {1814364},
}

@article {Trud-67,
    AUTHOR = {Trudinger, Neil S.},
     TITLE = {On {H}arnack type inequalities and their application to
              quasilinear elliptic equations},
   JOURNAL = {Comm. Pure Appl. Math.},
  FJOURNAL = {Communications on Pure and Applied Mathematics},
    VOLUME = {20},
      YEAR = {1967},
     PAGES = {721--747},
      ISSN = {0010-3640,1097-0312},
   MRCLASS = {35.47},
  MRNUMBER = {226198},
MRREVIEWER = {A.\ Kufner},
       DOI = {10.1002/cpa.3160200406},
       URL = {https://doi.org/10.1002/cpa.3160200406},
}

@book {Dra-Kuf-Nico,
    AUTHOR = {Dr\'{a}bek, Pavel and Kufner, Alois and Nicolosi, Francesco},
     TITLE = {Quasilinear elliptic equations with degenerations and
              singularities},
    SERIES = {De Gruyter Series in Nonlinear Analysis and Applications},
    VOLUME = {5},
 PUBLISHER = {Walter de Gruyter \& Co., Berlin},
      YEAR = {1997},
     PAGES = {xii+219},
      ISBN = {3-11-015490-0},
   MRCLASS = {35J65 (35J70 47H15 47N20)},
  MRNUMBER = {1460729},
MRREVIEWER = {Isabella\ Birindelli},
       DOI = {10.1515/9783110804775},
       URL = {https://doi.org/10.1515/9783110804775},
}

@article {Lieb,
    AUTHOR = {Lieberman, Gary M.},
     TITLE = {Boundary regularity for solutions of degenerate elliptic
              equations},
   JOURNAL = {Nonlinear Anal.},
  FJOURNAL = {Nonlinear Analysis. Theory, Methods \& Applications. An
              International Multidisciplinary Journal},
    VOLUME = {12},
      YEAR = {1988},
    NUMBER = {11},
     PAGES = {1203--1219},
      ISSN = {0362-546X,1873-5215},
   MRCLASS = {35J70 (35B65)},
  MRNUMBER = {969499},
MRREVIEWER = {Zuchi\ Chen},
       DOI = {10.1016/0362-546X(88)90053-3},
       URL = {https://doi.org/10.1016/0362-546X(88)90053-3},
}

@article {Lieb-91,
    AUTHOR = {Lieberman, Gary M.},
     TITLE = {The natural generalization of the natural conditions of Ladyzhenskaya and Uraletseva for elliptic
              equations},
   JOURNAL = {Comm. Partial Differential Equations},
  FJOURNAL = {Communications in Partial Differential Equations},
    VOLUME = {16},
      YEAR = {1991},
    NUMBER = {2-3},
     PAGES = {311--361},
      ISSN = {0360-5302,1532-4133},
   MRCLASS = {35J60 (35B65)},
  MRNUMBER = {1104103},
MRREVIEWER = {M.\ Biroli},
       DOI = {10.1080/03605309108820761},
       URL = {https://doi.org/10.1080/03605309108820761},
}

@article {Pohojaev-1979,
    AUTHOR = {Poho\v{z}aev, S. I.},
     TITLE = {An approach to nonlinear equations},
   JOURNAL = {Dokl. Akad. Nauk SSSR},
  FJOURNAL = {Doklady Akademii Nauk SSSR},
    VOLUME = {247},
      YEAR = {1979},
    NUMBER = {6},
     PAGES = {1327--1331},
      ISSN = {0002-3264},
   MRCLASS = {47H15 (35G20 58E30)},
  MRNUMBER = {550349},
MRREVIEWER = {Valeri\ Obukhovski\u{\i}},
}

@article {Pohojaev-1988,
    AUTHOR = {Pokhozhaev, S. I.},
     TITLE = {On a constructive method of the calculus of variations},
   JOURNAL = {Dokl. Akad. Nauk SSSR},
  FJOURNAL = {Doklady Akademii Nauk SSSR},
    VOLUME = {298},
      YEAR = {1988},
    NUMBER = {6},
     PAGES = {1330--1333},
      ISSN = {0002-3264},
   MRCLASS = {49B21 (58E05 58E99)},
  MRNUMBER = {947797},
MRREVIEWER = {Pavel\ Dr\'{a}bek},
}

@incollection {Pohozaev-1990,
    AUTHOR = {Pokhozhaev, S. I.},
     TITLE = {The fibration method for solving nonlinear boundary value
              problems},
      NOTE = {Translated in Proc. Steklov Inst. Math. {\bf 1992}, no. 3,
              157--173,
              Differential equations and function spaces (Russian)},
   JOURNAL = {Trudy Mat. Inst. Steklov.},
  FJOURNAL = {Akademiya Nauk SSSR. Trudy Matematicheskogo Instituta imeni V.
              A. Steklova},
    VOLUME = {192},
      YEAR = {1990},
     PAGES = {146--163},
      ISSN = {0371-9685},
   MRCLASS = {47H15 (35J65 47N20 58E05)},
  MRNUMBER = {1097896},
MRREVIEWER = {D.\ Pascali},
}

@article {Drabek-Pohozaev,
    AUTHOR = {Dr\'{a}bek, Pavel and Pohozaev, Stanislav I.},
     TITLE = {Positive solutions for the {$p$}-{L}aplacian: application of
              the fibering method},
   JOURNAL = {Proc. Roy. Soc. Edinburgh Sect. A},
  FJOURNAL = {Proceedings of the Royal Society of Edinburgh. Section A.
              Mathematics},
    VOLUME = {127},
      YEAR = {1997},
    NUMBER = {4},
     PAGES = {703--726},
      ISSN = {0308-2105,1473-7124},
   MRCLASS = {35J60 (35B05 35B32 35J70)},
  MRNUMBER = {1465416},
MRREVIEWER = {P.\ Lindqvist},
       DOI = {10.1017/S0308210500023787},
       URL = {https://doi.org/10.1017/S0308210500023787},
}

@article {Chen-Hu-Zhao-2003,
    AUTHOR = {Chen, Shutao and Hu, Changying and Zhao, Charles Xuejin},
     TITLE = {Uniform rotundity of {O}rlicz-{S}obolev spaces},
   JOURNAL = {Soochow J. Math.},
  FJOURNAL = {Soochow Journal of Mathematics},
    VOLUME = {29},
      YEAR = {2003},
    NUMBER = {3},
     PAGES = {299--312},
      ISSN = {0250-3255},
   MRCLASS = {46E35 (46B03 46B20 47L10)},
  MRNUMBER = {2003473},
MRREVIEWER = {Wolfgang Lusky},
}

@article {Pucci-Radu-2018,
    AUTHOR = {Pucci, Patrizia and R\u{a}dulescu, Vicen\c{t}iu D.},
     TITLE = {The maximum principle with lack of monotonicity},
   JOURNAL = {Electron. J. Qual. Theory Differ. Equ.},
  FJOURNAL = {Electronic Journal of Qualitative Theory of Differential
              Equations},
      YEAR = {2018},
     PAGES = {Paper No. 58, 11},
      ISSN = {1417-3875},
   MRCLASS = {35J92 (35B50 35B51 35R45)},
  MRNUMBER = {3827996},
       DOI = {10.14232/ejqtde.2018.1.58},
       URL = {https://doi.org/10.14232/ejqtde.2018.1.58},
}
\end{document}